\documentclass[pdflatex,sn-vancouver-num]{sn-jnl}% Math and Physical Sciences Numbered Reference Style
\usepackage{amsmath,amssymb,amsfonts}%
\usepackage{amsthm}%
\usepackage[title]{appendix}%
\newtheorem{theorem}{Theorem}%  meant for continuous numbers
\theoremstyle{thmstyletwo}%
\newtheorem{example}{Example}%
\newtheorem{question}{Question}

\theoremstyle{thmstylethree}%
\newtheorem{lemma}{Lemma}
\DeclareMathOperator*{\const}{const}
\DeclareMathOperator*{\coef}{coef}

\newcommand{\binomsq}[2]{\genfrac{[}{]}{0pt}{}{#1}{#2}}
\newcommand{\Z}{\mathbb{Z}}

\begin{document}

\title[Magic positivity]{On the magic positivity of the Ehrhart polynomials of dilated base polytope of minimal matroid}

%%=============================================================%%
%% GivenName	-> \fnm{Joergen W.}
%% Particle	-> \spfx{van der} -> surname prefix
%% FamilyName	-> \sur{Ploeg}
%% Suffix	-> \sfx{IV}
%% \author*[1,2]{\fnm{Joergen W.} \spfx{van der} \sur{Ploeg} 
%%  \sfx{IV}}\email{iauthor@gmail.com}
%%=============================================================%%

\author[1]{\fnm{Jiaxuan} \sur{Chen}}\email{jiaxchen@ohs.stanford.edu}

\author[2]{\fnm{Chance} \sur{Crigler}}\email{ccrigler@calpoly.edu}

\author[1]{\fnm{Shengtao} \sur{Ding}}\email{alexding@ohs.stanford.edu}
%\equalcont{These authors contributed equally to this work.}

\author*[1,2]{\fnm{Dana} \sur{Paquin}}\email{dpaquin@stanford.edu}

\author[1]{\fnm{Julian} \sur{Schennach}}\email{julian.vs.usa@gmail.com}
%\equalcont{These authors contributed equally to this work.}

\affil[1]{\orgname{Stanford Online High School, Stanford University}, \orgaddress{\street{415 Broadway, 2nd floor}, \city{Redwood City}, \postcode{94063}, \state{CA}, \country{USA}}}

\affil[2]{\orgdiv{Department of Mathematics}, \orgname{California Polytechnic State University}, \orgaddress{\street{1 Grand Avenue}, \city{San Luis Obispo}, \postcode{93407}, \state{CA}, \country{USA}}}

%%==================================%%
%% Sample for unstructured abstract %%
%%==================================%%

\abstract{In this paper, we consider the magic positivity of the Ehrhart polynomial of dilated base polytope of minimal matroid of rank $k$ dimension $n$. We prove that for $\displaystyle 1\leq k<\lfloor\frac{n}{2}\rfloor$, the $n-k-1$ dilate of the Ehrhart polynomial is not magic positive. We also show that for sufficiently large $n$, the $n-k$ dilate of the Ehrhart polynomial is, in general, magic positive. This gives rise to a lower bound of the $m-$index of the Ehrhart polynomial.}

\keywords{Ehrhart polynomials, magic positivity, dilated polytopes, minimal matroid}

%%\pacs[JEL Classification]{D8, H51}

\pacs[MSC Classification]{Primary: 05A15, Secondary: 52B40, 52B20, 05A10}

\maketitle

\section{Introduction}

A \emph{lattice point} is an element of $\Z^d$, and a \emph{lattice
polytope} is the convex hull of finitely many lattice points.  Given a lattice
polytope $P$, let
\[
  L_P(t) := \bigl|tP \cap \Z^d\bigr| \qquad (t \in \Z_{\ge 0})
\]
denote the number of lattice points in the $t$th dilate of $P$.  A theorem of
Ehrhart states that $L_P(t)$ agrees with a polynomial in $t$, called the
\emph{Ehrhart polynomial} of $P$ \cite{ehrhart1962polyedres, beck:disc}.

It is often useful to encode these values in the generating function, called the
\emph{Ehrhart series},
\[
  E(x) := \sum_{t \ge 0} L_P(t)\, x^t.
\]
A fundamental result in Ehrhart theory states that this series is a rational
function whose denominator depends only on $\dim P$; equivalently,
\[
  \sum_{t \ge 0} L_P(t)\, x^t \;=\; \frac{h^*(x)}{(1-x)^{\dim P+1}}
\]
for a polynomial $h^*(x)$, called the \emph{$h^*$-polynomial} (or the numerator
of the Ehrhart series).

A central question in Ehrhart theory is to understand the structure of
$h^*(x)$.  One approach is to analyze the behavior of its coefficients,
particularly with respect to three closely related properties.  If
$f(x) = \sum_{i=0}^d a_i x^i$ is a polynomial of degree $d$ with $a_i \in \mathbb{R}$,
then:
\begin{itemize}
  \item $f(x)$ is \emph{unimodal} if there exists $k$ such that
    $a_0 \le a_1 \le \cdots \le a_k \ge \cdots \ge a_d$;
  \item $f(x)$ is \emph{log-concave} if $a_i^2 \ge a_{i-1}a_{i+1}$ for all $i$;
  \item $f(x)$ is \emph{real-rooted} if all its roots are real.
\end{itemize}
It is well known that real-rootedness implies log-concavity, which in turn
implies unimodality.  These properties are of significant interest for
$h^*$-polynomials of lattice polytopes, though they can be difficult to verify
directly.

One novel approach to establishing such properties is via \emph{magic
positivity}.  Any polynomial $f(x)$ of degree $d$ admits a unique expansion in
the basis $\{x^i(x+1)^{d-i}\}_{i=0}^d$:
\[
  f(x) = \sum_{i=0}^d c_i\, x^i(x+1)^{d-i}.
\]
We say that $f$ is \emph{magic positive} if $c_i \ge 0$ for all $0 \le i \le d$.

\cite{konoike2025magicpositivityehrhartpolynomials}

It was shown in \cite{konoike2025magicpositivityehrhartpolynomials} that if $f = L_P$ is the Ehrhart polynomial of a
lattice polytope $P$, then magic positivity of $f$ implies real-rootedness of
the $h^*$-polynomial of $P$, and hence also log-concavity and unimodality of
$h^*(x)$.  Magic positivity of the Ehrhart polynomial is therefore a powerful
sufficient condition for the favorable coefficient behavior of the $h^*$-polynomial.

A key result of Konoike \cite{konoike2025magicpositivityehrhartpolynomials} shows that if $P$ is an Ehrhart-positive
polytope, then the Ehrhart polynomial of the $m$-th dilate $mP$ is magic
positive for all sufficiently large $m$.  This guarantees that the following
invariant is well-defined:
\[
  m\text{-index}(P) := \min\bigl\{\, k \in \Z_{>0} :
  L_{kP}(t) \text{ is magic positive}\,\bigr\}.
\]
Studying the $m$-index provides a natural way to quantify how much dilation is
required before magic positivity is attained.  In this paper, we focus on a
particular family of polytopes.

The base polytope of the minimal matroid of rank $k$ and dimension $n$,
denoted $B(T_{k,n})$, has lattice point
enumerator \cite{konoike2025magicpositivityehrhartpolynomials}
\[
  D_{k,n}(x) := \frac{1}{\binom{n-1}{k-1}}
  \binom{x+n-k}{n-k}
  \sum_{j=0}^{k-1} \binom{n-k-1+j}{j} \binom{x+j}{j}.
\]
Konoike \cite{konoike2025magicpositivityehrhartpolynomials} computed the
$m$-index of $B(T_{k,n})$ in the cases $k=1$ and $k=2$, obtaining
\[
  m\text{-index}(B(T_{1,n})) = n-1 \qquad\text{and}\qquad
  m\text{-index}(B(T_{2,n})) = n-2,
\]
and on the basis of these examples posed the following question.

\begin{question}[\cite{konoike2025magicpositivityehrhartpolynomials},
  Question~5.5]
  For $1 \le k \le \lfloor n/2 \rfloor$, do we have
  \[
    m\text{-index}(B(T_{k,n})) = n-k\,?
  \]
\end{question}

Although we do not resolve this question in full generality, we obtain
several results that provide significant evidence for an affirmative answer and
establish the conjecture in important cases.  Our main results are as follows.

\begin{theorem}\label{thm:n-k-1}
  For $1 \le k < \lfloor n/2 \rfloor$, the $m$-index of $B(T_{k,n})$ is at
  least $n-k$.  Equivalently, the $(n-k-1)$-th dilate of $D_{k,n}$ is not
  magic positive.
\end{theorem}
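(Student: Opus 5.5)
The plan is to exhibit a single negative coefficient in the magic expansion of $f(t):=D_{k,n}\bigl((n-k-1)t\bigr)$. The natural candidates are the coefficients attached to the top basis elements, because these are read off from the local behaviour of $f$ at $x=-1$. Write $d=\deg f=n-1$ and $f(x)=\sum_{i=0}^{d} c_i x^i(x+1)^{d-i}$. Putting $x=-1$ kills every term except $i=d$, so $f(-1)=(-1)^d c_d$. If $c_d=0$, differentiating and again putting $x=-1$ gives $f'(-1)=(-1)^{d-1}c_{d-1}$. More generally, if $f$ has a zero of order exactly $r$ at $-1$, then $c_d=\dots=c_{d-r+1}=0$, and $c_{d-r}$ has the sign of $(-1)^{d-r}f^{(r)}(-1)$.

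\textbf{Step 1: the vanishing.} Set $m=n-k-1$. Then $f(-1)=D_{k,n}(-m)$. The factor $\binom{x+n-k}{n-k}$ has the simple roots $x=-1,\dots,-(n-k)$, and $-m$ is among them because $1\le m\le n-k$. Hence $c_d=0$.

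\textbf{Step 2: the formula for $c_{d-1}$.} By the chain rule, $f'(-1)=m\,D_{k,n}'(-m)$. At the simple root $x=-m$, the derivative of the binomial factor is
\[
\frac{1}{(n-k)!}\prod_{\substack{1\le i\le n-k\\ i\neq m}}(i-m),
\]
whose only negative factors come from $i<m$, so its sign is $(-1)^{m-1}$. Assuming $D_{k,n}$ has a simple zero at $-m$, we get
\[
(-1)^{d-1}c_{d-1}=f'(-1)=\frac{m}{\binom{n-1}{k-1}}\cdot(\text{positive})\cdot(-1)^{m-1}\,S(-m),
\qquad S(x)=\sum_{j=0}^{k-1}\binom{m+j}{j}\binom{x+j}{j}.
\]
Since $d-1=n-2$ and $m-1=n-k-2$, we have $(-1)^{d-1}(-1)^{m-1}=(-1)^k$. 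Therefore the sign of $c_{d-1}$ is $(-1)^k\operatorname{sgn}S(-m)$.

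\textbf{Step 3: evaluating $S(-m)$.} The hypothesis $k<\lfloor n/2\rfloor$ gives $n\ge 2k+2$, so $m\ge k+1$ and every $j\le k-1$ satisfies $j<m$. For such $j$ we have $\binom{-m+j}{j}=(-1)^j\binom{m-1}{j}$. So
\[
S(-m)=\sum_{j=0}^{k-1}(-1)^j\binom{m+j}{j}\binom{m-1}{j},
\]
a truncated alternating sum. The goal is a closed form for this partial sum, for instance $(-1)^{k-1}$ times a positive product of binomials. I would look for it by telescoping, that is, by finding a hypergeometric term $G(j)$ with $G(j+1)-G(j)$ equal to the summand (Gosper's algorithm, or writing $\binom{m+j}{j}\binom{m-1}{j}$ as a difference via Pascal-type identities). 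Small cases suggest this: for $k=1$, $S=1$; for $k=2$, $S=1-(m+1)(m-1)<0$. These have sign $(-1)^{k-1}$, which makes $c_{d-1}$ negative, consistent with Konoike's results for $k=1,2$. Once the closed form has strict sign $(-1)^{k-1}$, Step 2 gives $c_{d-1}<0$, so $f$ is not magic positive and the $m$-index is at least $n-k$.

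\textbf{Main obstacle.} The hard step is Step 3: proving that the truncated alternating sum has the strict sign $(-1)^{k-1}$ for all $k\le m-1$. If no clean closed form appears, I would instead show that the terms $\binom{m+j}{j}\binom{m-1}{j}$ are strictly increasing in $j$ for $j\le k-1<m-1$ (compare consecutive ratios, which is where $m\ge k+1$ is used). Pairing terms from the top then forces the sign of the partial sum to be that of its last term, $(-1)^{k-1}$. This also rules out $S(-m)=0$, which confirms the assumption in Step 2 that the zero at $-m$ is simple, so no higher-order coefficient has to be analysed.
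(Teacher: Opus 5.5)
Your Steps 1 and 2 are correct. They reach the paper's reduction by a slicker route. The paper multiplies the magic expansions of $Q_{k,n}$ and $S_{k,n}$ to get $c_{n-2}=\gamma_{n-k-1}\alpha_{k-1}$ with $\gamma_{n-k-1}<0$, and its $\alpha_{k-1}$ is exactly $(-1)^{k-1}S(-m)$. So both arguments need $(-1)^{k-1}S(-m)>0$ for $m\ge k+1$. Your simple-zero assumption is unnecessary, since the product rule gives your formula for $f'(-1)$ unconditionally.

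\textbf{The gap is Step 3, where neither of your routes works.}

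\emph{The closed form you hope for does not exist.} For $k=3$, $S(-m)=\tfrac14(m^4-9m^2+12)$, whose roots in $m^2$ are $(9\pm\sqrt{33})/2$. This is not a sign times a product of binomials.

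\emph{The fallback is false in part of the range.} Since $T_j:=\binom{m+j}{j}\binom{m-1}{j}=\prod_{i=1}^{j}(m^2-i^2)/i^2$, you get $T_{j+1}/T_j=\bigl(m^2-(j+1)^2\bigr)/(j+1)^2$. This exceeds $1$ only for $j+1<m/\sqrt2$. Monotonicity up to $j=k-1$ therefore needs $m>\sqrt2\,(k-1)$, which $m\ge k+1$ does not give once $k\ge 6$. For $k=6$, $n=14$, $m=7$ the terms are $1,48,540,2400,4950,4752$. The top pair $4950-4752$ has the wrong sign. The total $-1709$ still has sign $(-1)^{k-1}$, but your pairing cannot show it. So your argument covers only $m>\sqrt2(k-1)$, which includes all $k\le 5$.

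\textbf{The missing idea is interpolation plus root counting, instead of working at the fixed $m$.} This is what the paper does. With $u=m^2$, $P(u):=S(-m)=\sum_{j=0}^{k-1}\prod_{i=1}^{j}(1-u/i^2)$ is a polynomial of degree $k-1$ in $u$. At $u=q^2$ with $1\le q\le k$, the terms with $j\ge q$ vanish, and what remains is a complete Chu--Vandermonde sum:
\[
P(q^2)=\sum_{j}\binom{-q-1}{j}\binom{q-1}{q-1-j}=\binom{-2}{q-1}=(-1)^{q-1}q .
\]
These $k$ nonzero values alternate in sign, so $P$ has a root in each interval $(q^2,(q+1)^2)$ for $1\le q\le k-1$. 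That accounts for all $k-1$ roots. Hence $(-1)^{k-1}P(u)>0$ for every $u\ge k^2$, in particular at $u=m^2$ with $m\ge k+1$.

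The paper runs the same argument in the variable $n$. It treats $\alpha_{k-1}$ as a degree-$(2k-2)$ polynomial in $n$, evaluates it at $n=1,\dots,k$ by Vandermonde, and reflects these values by the symmetry $n\mapsto 2k+2-n$, i.e.\ $m\mapsto -m$.
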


Theorem~\ref{thm:n-k-1} shows that the conjectured value $n-k$ cannot be
lowered by one in the range $1 \le k < \lfloor n/2 \rfloor$: the
$(n-k-1)$-th dilate always fails magic positivity.

\begin{theorem}\label{thm: asymptotic}
  For fixed $k$, $D_{k,n}((n-k)x)$ is magic positive for all sufficiently
  large $n$.
\end{theorem}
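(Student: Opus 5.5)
The plan is to compute the magic coefficients of $D_{k,n}((n-k)x)$ explicitly enough to control their signs as $n\to\infty$ with $k$ fixed. The starting point is a basic identity. Write $f(x)=\sum_i c_i x^i(x+1)^{d-i}$. Dividing by $(x+1)^d$ and substituting $y=x/(x+1)$, i.e. $x=y/(1-y)$, gives
\[
  (1-y)^d f\!\left(\tfrac{y}{1-y}\right)=\sum_{i=0}^d c_i y^i .
\]
So magic positivity of $f$ is equivalent to nonnegativity of the coefficients of the polynomial $g(y):=(1-y)^d f(y/(1-y))$.

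Here $d=n-1$, since $B(T_{k,n})$ has dimension $n-1$. The polynomial $D_{k,n}((n-k)x)$ factors as
\[
  \binom{(n-k)x+n-k}{n-k}\cdot \sum_{j=0}^{k-1}\binom{n-k-1+j}{j}\binom{(n-k)x+j}{j},
\]
with degrees $n-k$ and $k-1$. Let $m=n-k$. The transform is multiplicative on factors when the degrees add, so $g=g_1g_2$ with
\[
  g_1(y)=(1-y)^m\binom{\frac{my}{1-y}+m}{m}=\frac{1}{m!}\prod_{r=1}^{m}\bigl(my+r(1-y)\bigr)=\frac{1}{m!}\prod_{r=1}^{m}\bigl(r+(m-r)y\bigr).
\]
This has nonnegative coefficients, since each linear factor does; the $r=m$ factor is the constant $m$. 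This is exactly why the dilation factor $n-k$ is natural.

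Next, $g_2(y)=(1-y)^{k-1}\sum_j \binom{m-1+j}{j}\binom{\frac{my}{1-y}+j}{j}$, and each term equals
\[
  (1-y)^{k-1-j}\,\frac{1}{j!}\prod_{r=1}^{j}\bigl(r+(m-r)y\bigr).
\]
The factor $(1-y)^{k-1-j}$ introduces negative signs, so positivity has to come from the product with $g_1$ and from the dominance of the top term as $m\to\infty$.

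The key asymptotic step is to compare sizes for fixed $k$. The $j$-th term of $g_2$ carries the weight $\binom{m-1+j}{j}\sim m^j/j!$, and its polynomial part has coefficients of order up to $m^j$ on high powers of $y$. Hence the $j=k-1$ term has order $m^{2(k-1)}$ near $y^{k-1}$, while lower $j$ contribute at most $O(m^{2j})$. Normalizing, $g_2(y)/m^{2(k-1)}$ converges coefficientwise to something like $\frac{1}{((k-1)!)^2}y^{k-1}$ plus lower-order corrections. That limit alone gives positivity of the top coefficients but not of all coefficients of $g_2$. So the argument must be carried out on the full product $g=g_1g_2$, coefficient by coefficient: write $[y^i]g=\sum_{a+b=i}[y^a]g_1\,[y^b]g_2$ and bound the negative contributions.

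I would first write $[y^a]g_1$ as an elementary symmetric function: $[y^a]g_1=\frac{1}{m!}e$-type sums over choices of $a$ factors contributing $(m-r)$ and $m-a$ factors contributing $r$. Then I would show the ratios $[y^{a-1}]g_1/[y^a]g_1$ are controlled, at least $\Theta(1)$ and computable via log-concavity, since $g_1$ is real-rooted. Any negative part of $g_2$ comes with a factor smaller by at least $1/m$ relative to the positive top term. The real-rootedness (hence log-concavity and no internal zeros) of $g_1$ lets the positive term $[y^{a}]g_1\cdot[y^{k-1}]g_2$ absorb the shifted negative terms $[y^{a'}]g_1\,[y^{b}]g_2$ for $b<k-1$. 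A fixed finite number of ratio bounds, uniform in $a$, then yields positivity once $m$ exceeds a constant depending only on $k$.

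The main obstacle is the boundary range of small $i$, specifically $i<k-1$. There the dominant term $y^{k-1}$ of $g_2$ cannot contribute, and positivity must come from lower terms of $g_2$, whose constant term $\sum_j\binom{m-1+j}{j}$ is positive but whose low coefficients have mixed signs. For these finitely many indices I would compute $[y^i]g_2$ explicitly to leading order in $m$, expecting each to be positive of order $m^{i+(k-1)}$ or similar. The tool is the collapsed identity $g_2(y)=\sum_j\binom{m-1+j}{j}\sum_{s}\binom{j}{s}\ldots$, or alternatively a direct check that $g_2$ has nonnegative coefficients for large $m$. If that check works, positivity of $g$ follows at once from $g_1\ge0$, and everything reduces to the fixed-$k$ coefficient asymptotics of $g_2$.
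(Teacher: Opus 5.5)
Your reduction is correct and is the paper's. The substitution $x=y/(1-y)$ turns magic coefficients into ordinary ones; $g_1$ is (up to $1/m!$) the paper's $A$, with coefficients $\gamma_s\ge 0$; and $g_2$ is the paper's $B$, with coefficients $\alpha_r$. The gap is that you never establish the sign information on $g_2$ that the theorem rests on, and your main substitute for it fails.

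Your claim that the ratios $[y^{a-1}]g_1/[y^a]g_1$ are bounded below by a constant, uniformly in $a$, is false. We have $[y^0]g_1=1$ but $[y^1]g_1=\sum_{r=1}^{m}\frac{m-r}{r}=m(H_m-1)\sim m\ln m$, where $H_m$ is the $m$-th harmonic number. Symmetrically, $[y^{m-1}]g_1=1$ and $[y^{m-2}]g_1=mH_{m-1}-(m-1)\sim m\ln m$. So raising the $g_1$-index by one can gain a factor $m\ln m$, while lowering the $g_2$-index by one costs only a factor of about $m$. For instance, in $[y^{k-1}]g=[y^0]g_1[y^{k-1}]g_2+[y^1]g_1[y^{k-2}]g_2+\cdots$ the second product has order $m^{2k-2}\ln m$. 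That is larger than your ``dominant'' first product, of order $m^{2k-2}$. A negative part of $[y^{k-2}]g_2$ of the size you allow (a factor $1/m$ below the top term) therefore could not be absorbed. Log-concavity of $g_1$ cannot replace knowing the signs of the lower coefficients of $g_2$, and, as you note, it says nothing at all for $i<k-1$.

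The fallback you mention last, that $g_2$ itself has positive coefficients for large $m$, is the right route and is exactly what the paper proves. You leave it as an expectation, though. Your remark that $g_2/m^{2(k-1)}$ only detects the top coefficient points to the missing idea: each coefficient must be normalized at its own scale. The $j$-th summand contributes to $[y^d]g_2$ the terms $\binom{m-1+j}{j}(-1)^i\binom{k-1-j}{i}\,[y^\ell]\frac{1}{j!}\prod_{r=1}^{j}\bigl(r+(m-r)y\bigr)$ with $i+\ell=d$. These are polynomials in $m$ of degree $j+\ell$. Since $i\le k-1-j$ and $\ell\le d$, we get $j+\ell\le(k-1-d+\ell)+\ell\le k-1+d$. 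Equality holds only for $j=k-1$, $\ell=d$, $i=0$, a single term carrying no sign from $(1-y)^{k-1-j}$. Hence
\[
[y^d]g_2=\frac{1}{((k-1)!)^2}\binomsq{k}{d+1}\,m^{k-1+d}+O\bigl(m^{k-2+d}\bigr),\qquad 0\le d\le k-1,
\]
so every coefficient of $g_2$ is positive once $m\ge m_0(k)$. Then $g=g_1g_2$ has nonnegative coefficients simply because $g_1$ does. With this estimate in hand, the ratio and absorption machinery is unnecessary.
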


Theorem~\ref{thm: asymptotic} establishes the conjecture asymptotically: for
any fixed rank $k$, the $m$-index equals $n-k$ for all large enough $n$.

\begin{theorem}\label{thm: sharp}
  For $n = 3, 4, \ldots, 20$, the polynomial $D_{k,n}((n-k)x)$ is magic
  positive when $k = \lfloor n/2 \rfloor$, but fails to be magic positive when
  $k = \lfloor n/2 \rfloor + 1$.
\end{theorem}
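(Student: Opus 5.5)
Theorem~\ref{thm: sharp} is a finite statement, so we will prove it by exact computation. The plan is to describe an algorithm that computes the magic coefficients $c_i$ of $D_{k,n}((n-k)x)$ in exact rational arithmetic for each pair $(k,n)$ with $3\le n\le 20$ and $k\in\{\lfloor n/2\rfloor,\lfloor n/2\rfloor+1\}$. We then check the signs of these coefficients.

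First we write $f(x)=D_{k,n}((n-k)x)$. Each factor $\binom{(n-k)x+j}{j}$ is an explicit product of linear polynomials, so $f$ is a polynomial of degree $d=(n-k)+(k-1)=n-1$ with rational coefficients, and it can be expanded directly. To extract the magic coefficients we use the substitution $x=y/(1-y)$, which gives $x+1=1/(1-y)$. Then
\[
(1-y)^{d} f\!\left(\frac{y}{1-y}\right)=\sum_{i=0}^{d} c_i\, y^i .
\]
So the $c_i$ are exactly the coefficients of the polynomial $g(y):=(1-y)^d f(y/(1-y))$. Concretely, if $f(x)=\sum_j a_j x^j$, then $g(y)=\sum_j a_j y^j(1-y)^{d-j}$, and
\[
c_i=\sum_{j=0}^{i}(-1)^{i-j}\binom{d-j}{i-j}a_j .
\]
Since this is triangular, the $c_i$ are uniquely determined, which also confirms the uniqueness of the magic expansion.

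For the positive half ($k=\lfloor n/2\rfloor$), we list all $c_i$ for each $n$ and check that each is $\ge 0$. Some structure reduces the work. First, $c_0=f(0)=1$. Second, $c_d$ equals the leading coefficient of $g$, namely $g(1)$ up to sign conventions. Evaluating at $x=-1$ gives $f(-1)=(-1)^d c_d$, and by Ehrhart–Macdonald reciprocity this equals $(-1)^d$ times the number of interior lattice points. So the extreme coefficients are automatically nonnegative, and only the middle coefficients need to be checked.

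For the negative half ($k=\lfloor n/2\rfloor+1$), it is enough to exhibit, for each $n$, one index $i$ with $c_i<0$. We will record this witness index and the value of $c_i$ in a table. We expect the negative coefficient to appear near the top indices, say $i=d-1$. The reason is that Theorem~\ref{thm:n-k-1}'s mechanism presumably comes from the coefficient next to the top one. We would try to identify the failing index by first computing small cases such as $n=3,4,5$, and then confirm it uniformly across $n\le 20$.

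The main obstacle is not conceptual but concerns reliability. The numbers grow large, since $\binom{n-1}{k-1}$ and the products have many factors. The computation must therefore be carried out in exact rational arithmetic, for example with a computer algebra system, rather than in floating point, so that the sign conclusions are rigorous. A secondary obstacle is that near-zero coefficients at the boundary case $k=\lfloor n/2\rfloor$ could be exactly $0$, which is still allowed. Exact arithmetic is what distinguishes a coefficient that is exactly $0$ from one that is slightly negative. We will present the results as a table of the minimal $c_i$, with its index, for each $(k,n)$.
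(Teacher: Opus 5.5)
Your plan is essentially the paper's proof, which is also a brute-force exact computation in Maple: the paper homogenizes the product form of $D_{k,n}((n-k)x)$ with $y=x+1$ and $1=y-x$ and reads off the coefficients of $x^iy^{d-i}$, which is equivalent to your triangular change of basis via $x=y/(1-y)$. One caution, which your full table would expose anyway: do not expect the witness at $i=d-1$ in general, because at dilation $n-k$ one has $c_{d-1}\propto\gamma_{n-k-1}\alpha_{k-1}$ with $\gamma_{n-k-1}>0$ and $\alpha_{k-1}=(-1)^{k-1}S_{k,n}(-(n-k))=(-1)^n$ whenever $2k\ge n$, so for even $n$ this coefficient is positive (e.g.\ $D_{3,4}(x)=\binom{x+3}{3}$ has magic coefficients $\tfrac16(6,-7,2,0)$, with the negative entry at $i=1$), and note also that $g(1)=\sum_i c_i$ rather than $c_d$ (in fact $c_d=0$ here).
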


Theorem~\ref{thm: sharp} shows that the range $1 \le k \le \lfloor n/2 \rfloor$
in the conjecture is sharp: magic positivity at the $(n-k)$-th dilate does not
hold in general for $k > \lfloor n/2 \rfloor$.

The paper is organized as follows.  In Section~\ref{secnk1} we prove
Theorem~\ref{thm:n-k-1} by showing that the coefficient of $x^{n-2}(x+1)$ in
the magic-positive-basis expansion of $D_{k,n}((n-k-1)x)$ is strictly negative.
The key tools are a factorization of $D_{k,n}$ into components $Q_{k,n}$ and
$S_{k,n}$, a combinatorial analysis of $Q_{k,n}$ via its product structure, and
a careful study of the leading coefficient of $S_{k,n}$ using Vandermonde's
identity and a symmetry argument involving the Stirling numbers of the first
kind.  In Section~3 we prove Theorem~\ref{thm: asymptotic} by establishing that
all magic-positive-basis coefficients of both $Q_{k,n}((n-k)x)$ and
$S_{k,n}((n-k)x)$ are eventually positive as $n \to \infty$, using asymptotic
estimates on elementary symmetric functions and unsigned Stirling numbers.
Section~4 presents the numerical verification underlying Theorem~\ref{thm: sharp},
carried out using Maple (version~2024).

\section{The $n-k-1$ dilate of $D_{k,n}$}
\label{secnk1}
	In this section, we prove the non-magic-positivity of the Ehrhart polynomial of the $n-k-1$ dilate of the base polytope of minimal matroid of dimension $n$, rank $k$, proving Theorem \ref{thm:n-k-1} by deriving a lower bound on the $m-$index. We proceed by showing that, for $1 \leq k < \left\lfloor \tfrac{n}{2} \right\rfloor$, $D_{k,n}(x)$ does not exhibit magic positivity at the $(n-k-1)$ dilate because the coefficient of $x^{n-2}(x+1)$ is negative.

%JS this display seems redundant with Theorem 1. At the end of the section, we say that we prove Theorem 1, not proposition 4. So I moved the displayed proposition as part of the running text above.
%	\begin{proposition}
%	For
%	$1 \leq k < \left\lfloor \tfrac{n}{2} \right\rfloor$, $D_{k,n}(x)$ does not exhibit magic positivity at the $(n-k-1)$ dilate because the coefficient of $x^{n-2}(x+1)$ is negative.
%	\end{proposition}

    We wish to study the signs of the coefficients of $D_{k,n}((n-k-1)x)$ in the basis $\{ x^{i}(x+1)^{d-i} \}_{i=0}^{d}$. 
	To aid in this, we rewrite $D_{k,n}$ in the following form:
    %JS: rewrote this in a shorter more "paper like" form
    %also renamed P_{k,n} as Q_{k,n} to use the same convention as later
    \[
    D_{k,n}\!\big((n-k-1)x\big) 
		\propto Q_{k,n}((n-k-1)x)\ S_{k,n}((n-k-1)x)
    \]
	%where
%	\begin{align*}
%		D_{k,n}\!\big((n-k-1)x\big) 
%		&\propto 
%		\binom{(n-k-1)x+n-k}{n-k} 
%		\sum_{j=0}^{k-1} 
%		\binom{n-k-1+j}{j} \binom{(n-k-1)x+j}{j} \\[1ex]
%		&\propto
%		\left(\prod_{i=0}^{n-k-1} \big((n-k-1)x + n-k - i\big)\right)
%		\sum_{j=0}^{k-1} 
%		\binom{n-k-1+j}{j} \binom{(n-k-1)x+j}{j} \\[1ex]
%		&= 
%		\left(\prod_{i=0}^{n-k-1} \big((n-k-i)(x+1) + (i-1)x\big)\right)
%		\sum_{j=0}^{k-1} 
%		\binom{n-k-1+j}{j} \binom{(n-k-1)x+j}{j}
%	\end{align*}
	%\begin{eqnarray*}
	%Q_{k,n}((n-k-1)x)&:=&\binom{(n-k-1)x+n-k}{n-k}\\
    %&=&\prod_{i=0}^{n-k-1} \big((n-k-1)x + n-k - i\big)\\
    %&=&\prod_{i=0}^{n-k-1} \big((n-k-i)(x+1) + (i-1)x\big) \text{ and}\\
	%S_{k,n}((n-k-1)x)&:=&\sum_{j=0}^{k-1}\binom{n-k-1+j}{j} \binom{(n-k-1)x+j}{j}.
	%\end{eqnarray*}
    where the proportionality constant is always positive and where
	\begin{eqnarray}
	Q_{k,n}(x)&:=&\binom{x+n-k}{n-k}  \label{defQkn} \text{ and}\\
	S_{k,n}(x)&:=&\sum_{j=0}^{k-1}\binom{n-k-1+j}{j} \binom{x+j}{j}. \label{defSkn}
	\end{eqnarray}
	We are then interested in studying the coefficient of $x^{n-2}(x+1)$ of $Q_{k,n}((n-k-1)x) \cdot S_{k,n}((n-k-1)x)$ after expansion in the magic basis.
	
	\medskip
	To do this, we first note that
	\begin{eqnarray*}    
	Q_{k,n}((n-k-1)x)&=&\prod_{i=0}^{n-k-1} \big((n-k-1)x + n-k - i\big) \\
    &=&\prod_{i=0}^{n-k-1} \big((n-k-i)(x+1) + (i-1)x\big)
    \end{eqnarray*}
	and expand $Q_{k,n}((n-k-1)x)$ in the basis $\{x^{a}(x+1)^{\,n-k-a}\}_{a=0}^{n-k}$ and $S_{k,n}((n-k-1)x)$ in the basis
	$\{x^{b}(x+1)^{\,k-1-b}\}_{b=0}^{k-1}$. Since
	\[
	x^{a}(x+1)^{\,n-k-a}\cdot x^{b}(x+1)^{\,k-1-b}
	= x^{a+b}(x+1)^{\,n-1-(a+b)},
	\]
	the coefficient of $x^{n-2}(x+1)$ in $Q_{k,n}((n-k-1)x)S_{k,n}((n-k-1)x)$ can only come from pairs with $a+b=n-2$.
	So, it depends only on the
	$x^{n-k}$ and $x^{n-k-1}(x+1)$ terms of $Q_{k,n}(n-k-1(x))$ and the $x^{k-1}$ and $x^{k-2}(x+1)$ terms of $S_{k,n}((n-k-1)x)$.
	
    \begin{lemma}
	When $Q_{k,n}((n-k-1)x)$ is expanded in the magic positive basis, the coefficient of $x^{(n-k)}$ is 0 while the coefficient of $x^{n-k-1}(x+1)$ is always negative.
	\end{lemma}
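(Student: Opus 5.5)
We work directly with the factorization displayed just above,
\[
Q_{k,n}((n-k-1)x)\propto\prod_{i=0}^{n-k-1}\bigl((n-k-i)(x+1)+(i-1)x\bigr),
\]
with $m:=n-k$ linear factors, the $i$-th one being $\alpha_i(x+1)+\beta_i x$ where $\alpha_i=m-i$ and $\beta_i=i-1$. Since the basis $\{x^a(x+1)^{m-a}\}$ is closed under multiplication of monomials in the two ``variables'' $x$ and $x+1$, expanding the product term by term gives the coefficient of $x^a(x+1)^{m-a}$ as the elementary-symmetric-type sum over choices of $a$ factors contributing $\beta_i x$ and $m-a$ factors contributing $\alpha_i(x+1)$. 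This is legitimate because the $x^a(x+1)^{m-a}$ form a basis, so the expansion is unique. The positive proportionality constant does not affect signs.

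\emph{Coefficient of $x^{m}$.} This coefficient is $\prod_{i=0}^{m-1}\beta_i=\prod_{i=0}^{m-1}(i-1)$. The factor with $i=1$ gives $\beta_1=0$, so the product vanishes. Equivalently, the factor for $i=1$ is $(m-1)(x+1)$, which has no pure $x$ part.

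\emph{Coefficient of $x^{m-1}(x+1)$.} This coefficient is $\sum_{j=0}^{m-1}\alpha_j\prod_{i\ne j}\beta_i$. Every term with $j\neq 1$ still contains the factor $\beta_1=0$, so only $j=1$ survives, and the coefficient equals
\[
\alpha_1\prod_{i\ne 1}\beta_i=(m-1)\cdot(-1)\cdot\prod_{i=2}^{m-1}(i-1)=-(m-1)\,(m-2)!=-(m-1)!.
\]
The last step is simply that $\beta_0=-1$ is the unique negative $\beta_i$, and the remaining $\beta_i$ for $i\ge2$ are $1,2,\dots,m-2$.

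It remains to check the sign conditions. We need $m-1\ge1$, which holds because $k<\lfloor n/2\rfloor$ forces $m=n-k\ge 2$; in fact $m\ge3$. Hence the coefficient is $-(m-1)!<0$. The only real obstacle is bookkeeping. We must make sure the product identity $(m-1)x+m-i=(m-i)(x+1)+(i-1)x$ holds, which is immediate. We must also confirm that the factor $\binom{x+n-k}{n-k}$ contributes only $1/(n-k)!>0$ as its constant, so the displayed product differs from $Q_{k,n}((n-k-1)x)$ by a positive scalar. Both are routine, so the lemma follows.
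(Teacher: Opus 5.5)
Your proof is correct and follows the same argument as the paper: the $i=1$ factor $(n-k-1)(x+1)$ has no $x$-summand, which kills the $x^{n-k}$ coefficient and forces the single $(x+1)$ to come from that factor, with the $i=0$ factor supplying the only negative sign. You go slightly further than the paper in three ways: you compute the coefficient exactly as $-(m-1)!$ (i.e.\ $-1/(n-k)$ once the $1/(n-k)!$ normalization is restored, which the paper's ``$=$'' silently drops), and you check that $m=n-k\ge 2$ is needed for the $i=1$ factor to exist at all, which holds in the range $k<\lfloor n/2\rfloor$.
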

\begin{proof}

	To obtain a pure $x^{n-k}$ term from
	 \[
	 Q_{k,n}((n-k-1)x)=\prod_{i=0}^{n-k-1}\big((n-k-i)(x+1)+(i-1)x\big),
	 \]
	 we would have to choose the $x$-summand from every factor. However, the $i=1$ factor is
	 \[
	 (n-k-1)(x+1)+0\cdot x,
	 \]
	 so it contributes no $x$-term. So $x^{n-k}$ cannot occur, and its coefficient is $0$.
	 
  To obtain
	 $x^{n-k-1}(x+1)$ from
	 \[
	 Q_{k,n}((n-k-1)x)=\prod_{i=0}^{n-k-1}\big((n-k-i)(x+1)+(i-1)x\big),
	 \]
	 we must choose an $(x+1)$-summand from exactly one factor and an $x$-summand from the remaining $n-k-1$ factors.
	 Since the $i=1$ factor equals $(n-k-1)(x+1)+0\cdot x$, it has no $x$-summand. So the $(x+1)$ must come from this $i=1$ factor, and we must choose $x$ from every $i\neq 1$ factor.
	 This contribution's sign must be negative since the $i=0$ factor contributes $-x$ while all other chosen $x$-summands (with $i\ge 2$) have positive coefficients.
	\end{proof}
	 We now move our attention to $S_{k,n}((n-k-1)x)$, which we calculate via the following slightly more general lemma.

%JS: one part of the paper used \alpha and another used \beta. I converted all to \alpha.

%JS: This lemma combines what was Lemma 2 and 6 into a single lemma that is referenced in both sections. (Otherwise it was repetitive.)
\begin{lemma}
\label{lemdilS}
For a given dilation $\sigma \in\mathbb{N}$, the expansion of $S_{k,n}(\sigma x)$ in the magic positive basis is
\begin{equation}
\label{eqdilS}
S_{k,n}(\sigma x)=\sum_{r=0}^{k-1}\alpha_r x^r (x+1)^{k-1-r}
\end{equation}
where \[\alpha_r=\sum_{j=0}^{k-1}\binom{n-k-1+j}{j}\frac{1}{j!}\sum_{i=0}^{j}s(j,i)\sum_{t=0}^{\min(i,r)}\binom{i}{t}\sigma^t j^{i-t} (-1)^{r-t}\binom{k-1-t}{r-t},\]
in which the $\min(i,r)$ can be replaced by $i$ under the convention that $\binom{a}{b}=0$ for $b<0$.
\end{lemma}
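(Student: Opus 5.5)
The plan is a direct computation: expand each binomial $\binom{\sigma x+j}{j}$ as a polynomial in $x$ via Stirling numbers of the first kind, re-expand each monomial $x^t$ in the magic basis of degree $k-1$, and collect coefficients. Uniqueness of the expansion in the basis $\{x^r(x+1)^{k-1-r}\}_{r=0}^{k-1}$ then identifies the collected coefficient with $\alpha_r$. Throughout, $s(j,i)$ denotes the signed Stirling numbers of the first kind.

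\emph{Step 1 (Stirling expansion).} For each $0\le j\le k-1$, set $y=\sigma x+j$. Then
\[
\binom{\sigma x+j}{j}=\frac{1}{j!}(\sigma x+j)(\sigma x+j-1)\cdots(\sigma x+1)=\frac{1}{j!}\,y(y-1)\cdots(y-j+1).
\]
The product on the right is the falling factorial $(y)_j$, and the defining identity of the signed Stirling numbers of the first kind gives
\[
(y)_j=\sum_{i=0}^{j}s(j,i)\,y^i .
\]

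\emph{Step 2 (binomial theorem).} Substituting $y=\sigma x+j$ and expanding,
\[
y^i=\sum_{t=0}^{i}\binom{i}{t}\sigma^t j^{\,i-t}x^t .
\]
Hence $S_{k,n}(\sigma x)$ is a combination of the monomials $x^t$. Every $t$ that appears satisfies $t\le i\le j\le k-1$.

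\emph{Step 3 (change of basis).} Because $t\le k-1$, we can write $1=\big((x+1)-x\big)^{k-1-t}$ and obtain
\[
x^t=x^t\big((x+1)-x\big)^{k-1-t}=\sum_{r=t}^{k-1}(-1)^{r-t}\binom{k-1-t}{r-t}x^r(x+1)^{k-1-r}.
\]
Inserting this into Steps 1--2 and interchanging the finite sums gives the expansion \eqref{eqdilS}. The coefficient of $x^r(x+1)^{k-1-r}$ collects exactly the terms with $t\le r$, and also requires $t\le i$. This is the stated $\alpha_r$, with the inner sum running over $t\le\min(i,r)$.

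\emph{Step 4 (uniqueness and the convention).} Since $\{x^r(x+1)^{k-1-r}\}_{r=0}^{k-1}$ is a basis of the polynomials of degree at most $k-1$, this expansion is the magic-basis expansion. Under the convention $\binom{a}{b}=0$ for $b<0$, the factor $\binom{k-1-t}{r-t}$ vanishes whenever $t>r$. So the upper limit $\min(i,r)$ may be replaced by $i$.

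There is no real obstacle here; the argument is bookkeeping. The one point needing care is the index range: we need $t\le k-1$ so that $(x+1-x)^{k-1-t}$ is a genuine polynomial identity, and $t\le i\le j\le k-1$ guarantees this. I would also fix the sign convention for $s(j,i)$ as the signed one, consistent with expanding $(y)_j$.
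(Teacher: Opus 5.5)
Your proposal is correct and follows the paper's proof essentially step for step. Like the paper, you expand $\binom{\sigma x+j}{j}$ via signed Stirling numbers of the first kind, expand $(\sigma x+j)^i$ by the binomial theorem, rewrite each $x^t$ in the degree-$(k-1)$ magic basis, and interchange the finite sums. The only differences are cosmetic: you derive the change-of-basis identity directly from $x^t\bigl((x+1)-x\bigr)^{k-1-t}$ instead of citing Konoike's Lemma~2.1, and you spell out the uniqueness of the basis expansion and the vanishing of $\binom{k-1-t}{r-t}$ for $t>r$, which the paper leaves implicit here.
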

\begin{proof}
We use the identities
	 \[
	 \sum_{i=t}^{d} (-1)^{\,i-t} \binom{d-t}{\,i-t\,} x^{i}(x+1)^{d-i} \;=\; x^{t}  \text{ and }
	 \binom{x}{j} \;=\; \frac{1}{j!}\sum_{i=0}^{j} s(j,i)\,x^{i},
	 \]
	 where the former is readily obtained by specializing Konoike's lemma 2.1 and the latter, from the properties of 
     signed Stirling numbers of the first kind $s(j,i)$ \cite[Chapter 6]{knuth:concrete}.
     Applying these identities and the Binomial Theorem to our sum yields:
\begin{align*}
\lefteqn{{S_{k,n}(\sigma x)}=\sum_{j=0}^{k-1}\binom{n-k-1+j}{j}\binom{\sigma x+j}{j}=\sum_{j=0}^{k-1}\binom{n-k-1+j}{j}\frac{1}{j!}\sum_{i=0}^j s(j,i)(\sigma x+j)^i}\\
&=\sum_{j=0}^{k-1}\binom{n-k-1+j}{j}\frac{1}{j!}\sum_{i=0}^j s(j,i)\sum_{t=0}^{i}\binom{i}{t}\sigma^t x^t j^{i-t}\\
&=\sum_{j=0}^{k-1}\binom{n-k-1+j}{j}\frac{1}{j!}\sum_{i=0}^j s(j,i)\sum_{t=0}^{i}\binom{i}{t}\sigma^t j^{i-t}\sum_{r=t}^{k-1} (-1)^{r-t}\binom{k-1-t}{r-t}x^r (x+1)^{k-1-r}\\
&=\sum_{r=0}^{k-1}\left(\sum_{j=0}^{k-1}\binom{n-k-1+j}{j}\frac{1}{j!}\sum_{i=0}^{j}s(j,i)\sum_{t=0}^{\min(i,r)}\binom{i}{t}\sigma^t j^{i-t} (-1)^{r-t}\binom{k-1-t}{r-t}\right)\times \\
&\quad x^r (x+1)^{k-1-r}\\
&=\sum_{r=0}^{k-1}\alpha_r x^r (x+1)^{k-1-r}.
\end{align*}
\end{proof}

%JS this result is not directly used later and is redundant with the text
%\begin{lemma}
%\label{lemmaSnk1}
%	 The coefficient of $x^{k-1}$ of $S_{k,n}((n-k-1)x)$ in the magic positive basis is always positive.
%	\end{lemma}

%JS here I tried to re-arrange to avoid a lemma within a lemma...
	 To study the sign of $\alpha_{k-1}$ we establish the following fact:
	\begin{lemma}\label{lemalphakm1} When $n$ is an integer and $1 \leq n \leq k$ we have $\alpha_{k-1}=(-1)^{n+1}(k-n-1).$
	\end{lemma}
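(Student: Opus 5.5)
The plan is to avoid the triple sum in Lemma~\ref{lemdilS} and read off the top magic coefficient by evaluating at $x=-1$. For any polynomial $f(x)=\sum_{r=0}^{d}c_r x^r(x+1)^{d-r}$, every term with $r<d$ contains a factor $(x+1)$ and vanishes at $x=-1$. Hence $f(-1)=(-1)^d c_d$. Applying this to \eqref{eqdilS} with $d=k-1$ and the dilation $\sigma=n-k-1$ of this section gives
\[
\alpha_{k-1}=(-1)^{k-1}S_{k,n}(-\sigma)
=(-1)^{k-1}\sum_{j=0}^{k-1}\binom{n-k-1+j}{j}\binom{j-\sigma}{j}.
\]
From here the claim becomes a single binomial-sum evaluation.

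The second step puts both factors in upper-negation form. Using $\binom{j-\sigma}{j}=(-1)^j\binom{\sigma-1}{j}$ and $\binom{n-k-1+j}{j}=(-1)^j\binom{k-n}{j}$, the summand becomes $\binom{k-n}{j}\binom{\sigma-1}{j}$. The hypothesis $1\le n\le k$ enters here. Since $k-n\ge 0$, the factor $\binom{k-n}{j}$ vanishes for $j>k-n$. Since $n\ge1$, we have $k-n\le k-1$, so the truncation at $j=k-1$ in \eqref{defSkn} cuts off nothing and the sum may run over all $j\ge 0$. Writing $\binom{k-n}{j}=\binom{k-n}{k-n-j}$, Vandermonde's identity gives the closed form
\[
\sum_{j}\binom{k-n}{k-n-j}\binom{\sigma-1}{j}=\binom{k-n+\sigma-1}{k-n}.
\]
Since $\sigma-1=n-k-2$, the upper index is a small negative integer. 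Negating it once more, $\binom{-m}{N}=(-1)^N\binom{m+N-1}{N}$, leaves a binomial that is linear in $N=k-n$ times a sign $(-1)^{k-n}$. Combining this with the prefactor $(-1)^{k-1}$ turns the overall sign into $(-1)^{n+1}$. It remains to match the linear factor with $k-n-1$.

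The main obstacle is exactly this last matching of the affine factor, because it is an off-by-two-sensitive step. The constant is fixed entirely by which dilation $\sigma$ is substituted and by the exact upper index after the two negations. A slip of one in $\sigma-1$, or in the Vandermonde range, changes $k-n-1$ into $k-n+1$. So I will carry out this step twice, independently. First, I will check it directly on the smallest case $n=k$, where only $j=0$ survives and the statement predicts $\alpha_{k-1}=(-1)^{k+1}(-1)$. Second, I will check $n=k-1$, where the sum has just two terms. I will use these two cases to pin down the precise upper index that yields $k-n-1$, and only then write the general Vandermonde evaluation in that normalized form. If the small cases do not match for $\sigma=n-k-1$, I will recheck which dilation the lemma intends, keeping the evaluation-at-$-1$ reduction and the Vandermonde collapse unchanged.
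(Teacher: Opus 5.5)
Up to the last step your route is correct, and it is a cleaner version of the paper's. Evaluating \eqref{eqdilS} at $x=-1$ gives $\alpha_{k-1}=(-1)^{k-1}S_{k,n}(-\sigma)$ directly. This is exactly the sum the paper reaches from Lemma~\ref{lemdilS} via the binomial theorem and the Stirling identity. Your two upper negations, the truncation argument (only $j\le k-n\le k-1$ survive) and the Vandermonde collapse are the paper's remaining steps.

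The genuine gap is that you never carry out the final evaluation. Instead you plan to adjust the normalization until it produces $k-n-1$. That cannot work, because the printed formula is false. With $\sigma=n-k-1$, your own Vandermonde output has upper index $k-n+\sigma-1=-2$, so
\[
\alpha_{k-1}=(-1)^{k-1}\binom{-2}{k-n}=(-1)^{k-1}(-1)^{k-n}(k-n+1)=(-1)^{n+1}(k-n+1).
\]
Your planned checks confirm this and refute the statement:
\begin{itemize}
\item For $n=k$ one has $\binom{j-1}{j}=0$ for $j\ge 1$, so $S_{k,k}\equiv 1=\bigl((x+1)-x\bigr)^{k-1}$ and $\alpha_{k-1}=(-1)^{k-1}$. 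The statement predicts $(-1)^{k}$.
\item For $n=k-1$ (with $k\ge 2$) one gets $S_{k,k-1}(x)=-x$, hence $\alpha_{k-1}=2(-1)^{k}$. The statement predicts $0$.
\end{itemize}
Since $S_{k,k}\equiv 1$ for every $\sigma$ (equivalently, the Vandermonde value at $n=k$ is $\binom{c}{0}=1$ for any upper index $c$), your fallback cannot repair the case $n=k$. Neither re-choosing the dilation nor re-normalizing the upper index helps.

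The paper's proof reaches the same expression $(-1)^{2k-n-1}(k-n+1)$. It then writes the invalid equality $(-1)^{2k-n-1}(k-n+1)=(-1)^{n+1}(k-n-1)$, so the lemma is a misprint for $\alpha_{k-1}=(-1)^{n+1}(k-n+1)$. The corrected value is also the one the proof of Theorem~\ref{thm:n-k-1} actually uses. It gives $\alpha_{k-1}=k$ at $n=1$ and nonzero, strictly alternating signs for $n=1,\dots,k$. By contrast, $k-n-1$ would give $k-2$ at $n=1$ and a zero at $n=k-1$, which breaks the sign-alternation argument there.

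So the right move is to finish your Vandermonde evaluation as above and state and prove the corrected formula, rather than tuning the computation to fit the printed one.
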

\begin{proof}
By Lemma \ref{lemdilS} with $\sigma =n-k-1$, the $\alpha_{k-1}$ coefficient in Equation (\ref{eqdilS}) is given by
	 \[
	 \alpha_{k-1}
	 = \sum_{j=0}^{k-1} \binom{n-k-1+j}{j}\,\frac{1}{j!}
	 \sum_{i=0}^{j} s(j,i)
	 \sum_{t=0}^{\min(i,k-1)}
	 \binom{i}{t}\,(n-k-1)^{t}\,j^{\,i-t}\,
	 (-1)^{\,k-1-t}\binom{k-1-t}{\,k-1-t\,}.
	 \]
	 Since $i\leq k-1$, we obtain 
	 \[
	 \alpha_{k-1} = \sum_{j=0}^{k-1} \binom{n-k-1+j}{j}\,\frac{1}{j!}
	 \sum_{i=0}^{j} s(j,i)
	 \sum_{t=0}^{i}
	 \binom{i}{t}\,(n-k-1)^{t}\,j^{\,i-t}\,
	 (-1)^{\,k-1-t}.
	 \]
	 By the binomial theorem and Stirling identities:
	 \begin{eqnarray*}
	 \alpha_{k-1} &=& (-1)^{\,k-1}
	 \sum_{j=0}^{k-1} \binom{n-k-1+j}{j}\,\frac{1}{j!}
	 \sum_{i=0}^{j} s(j,i)\,\bigl(j-(n-k-1)\bigr)^{i}\\
	 &=&(-1)^{\,k-1}
	 \sum_{j=0}^{k-1} 
	 \binom{n-k-1+j}{j}\,\frac{1}{j!}\,
	 \bigl(j-(n-k-1)\bigr)_{j}\\
	 &=&(-1)^{\,k-1}
	 \sum_{j=0}^{k-1} 
	 \binom{n-k-1+j}{j}\,
	 \binom{\,j-(n-k-1)\,}{\,j\,}\\
	 &=&(-1)^{\,k-1}
	 \sum_{j=0}^{k-1} 
	 \binom{n-k-1+j}{j}\,
	 (-1)^{j}\binom{n-k-2}{j}.
	 \end{eqnarray*}

	 Since $n\leq k$, it makes sense to define $\Delta=k-n$. Rewriting we obtain:
	 \[
	 (-1)^{k-1}\sum_{j=0}^{k-1} (-1)^{j}\binom{-\Delta-1+j}{j}\binom{-\Delta-2}{j}
	 \]
	 Note that if $j>\Delta$ we have $\binom{-\Delta+1+j}{j}$ with a positive upper index but a larger lower index, so these terms will contribute 0 to our sum. So, we may stop our index at $\Delta$ and rewrite to remove the negative upper index. Standard manipulations yield
	 \[
	 \alpha_{k-1}=(-1)^{k-1}\sum_{j=0}^{\Delta} (-1)^{j}\binom{\Delta}{j}\binom{\Delta+j+1}{j}
	 = (-1)^{k-1}\sum_{j=0}^{\Delta} \binom{\Delta}{j}\binom{-\Delta-2}{j}
	 \]\[
	 = (-1)^{k-1}\sum_{j=0}^{\Delta} \binom{\Delta}{\Delta-j}\binom{-\Delta-2}{j}
	 = (-1)^{k-1}\binom{-2}{\Delta}
	 = (-1)^{k-1}(-1)^{\Delta}\binom{\Delta+1}{\Delta}.
	 \]
	 Where the second to last equality is attained by Vandermonde's identity. Thus,
	 \[\alpha_{k-1}=(-1)^{2k-n-1}(k-n+1)=(-1)^{n+1}(k-n-1)\]
	 as desired.
\end{proof}

	 For the next steps, it is convenient to emphasize the parameter dependence and write
	 \[
	 \alpha_{k-1}\equiv \alpha_{k-1}(k,n),
	 \]
	 since the expression for $\alpha_{k-1}$ involves both $k$ and $n$.
	 
\begin{lemma}\label{lemalphasym}For fixed $k$, $\alpha_{k-1}(k,n)$ is symmetric in $n$ about $n=k+1$.
    %or $\alpha_{k-1}(k,k+1+t)=\alpha_{k-1}(k,k+1-t)$ for all $t=0,\ldots , k$
	\end{lemma}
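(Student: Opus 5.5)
The plan is to avoid the closed form of Lemma~\ref{lemalphakm1}, which only covers $1\le n\le k$, and to work instead with the intermediate expression found in its proof. That expression holds for every integer $n$. The steps leading to
\[
\alpha_{k-1}(k,n)=(-1)^{k-1}\sum_{j=0}^{k-1}(-1)^{j}\binom{n-k-1+j}{j}\binom{n-k-2}{j}
\]
use only Lemma~\ref{lemdilS} with $\sigma=n-k-1$, the binomial theorem, and the Stirling-number identity $\sum_i s(j,i)y^i=(y)_j$. None of these uses $n\le k$, provided we read $\binom{y}{j}=(y)_j/j!$ as a polynomial in $y$, so that negative upper indices are allowed. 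I will first check this, since it is the one place where the argument could quietly rely on the hypothesis $n\le k$. Once that is settled, $\alpha_{k-1}(k,n)$ is, for fixed $k$, a polynomial in $n$, given by a sum whose index range $0\le j\le k-1$ does not depend on $n$.

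Next, set $t=n-k-1$, so that reflecting $n$ about $k+1$ is the substitution $t\mapsto -t$. The $j$-th summand is then
\[
T_j(t)=(-1)^j\binom{t+j}{j}\binom{t-1}{j}.
\]
The claim follows if each summand is separately even in $t$, that is, $T_j(-t)=T_j(t)$. To show this, apply the upper-negation identity $\binom{-a}{j}=(-1)^j\binom{a+j-1}{j}$, which holds for polynomial binomial coefficients, to both factors of $T_j(-t)$:
\[
\binom{-t+j}{j}=(-1)^j\binom{t-1}{j},\qquad \binom{-t-1}{j}=(-1)^j\binom{t+j}{j}.
\]
Substituting gives
\[
T_j(-t)=(-1)^j\cdot(-1)^j\binom{t-1}{j}\cdot(-1)^j\binom{t+j}{j}=T_j(t).
\]
The two factors simply exchange roles, and the three signs combine to $(-1)^{3j}=(-1)^j$. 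Summing over $j$ and restoring the common prefactor $(-1)^{k-1}$ then gives $\alpha_{k-1}(k,k+1+t)=\alpha_{k-1}(k,k+1-t)$ for every integer $t$, which is the asserted symmetry.

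The main obstacle is not the computation but its legitimacy. The definition of $\alpha_{k-1}$ through Lemma~\ref{lemdilS} assumes $\sigma\in\mathbb{N}$, while the reflected values $n<k+1$ give a negative dilation $\sigma=n-k-1$. I will handle this as Lemma~\ref{lemalphakm1} implicitly does. The formula for $\alpha_r$ is a polynomial identity in $\sigma$, so $\alpha_{k-1}(k,n)$ is defined for all integers $n$ by the same expression. All binomial manipulations above are identities of polynomials in $t$, so no case split on the sign of $t$ is needed.
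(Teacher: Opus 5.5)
Your proposal is correct and is essentially the paper's proof: substitute $t=n-k-1$ into the intermediate expression $(-1)^{k-1}\sum_{j}(-1)^j\binom{t+j}{j}\binom{t-1}{j}$ from the proof of Lemma~\ref{lemalphakm1}, then apply upper negation to both factors so that they swap and the signs combine to $(-1)^{3j}=(-1)^j$. Your additional check that this expression holds for all $n$ is a useful clarification the paper leaves implicit; it holds as a polynomial identity in $\sigma$, hence in real $t$, and real $t$ is what the later root-counting argument needs.
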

\begin{proof}

	 Thinking of $n$ as a perturbation away from $k+1$ gives $n=((k+1)+t)$ for some real $t$. This, in turn yields the relation $t=n-(k+1)$ substituting this into our equation for $\alpha_{k-1}(k,n)$ gives 
	 \[
	 \alpha_{k-1}(t)=\sum_{j=0}^{k-1} (-1)^j \binom{t+j}{j}\binom{t-1}{j}.
	 \]
	 Changing $t$ to $-t$ gives
	 \[
	 (-1)^{k-1} \sum_{j=0}^{k-1} (-1)^j \binom{-t+j}{j}\binom{-t-1}{j}
	 =
	 (-1)^{k-1} \sum_{j=0}^{k-1} (-1)^{3j} \binom{t+j}{j}\binom{t-1}{j},
	 \]
	 by the negative index theorem. This returns our original relation, demonstrating symmetry about $k+1$.
\end{proof}

%JS reusing the same \alpha notation for single- and multi-parameter coefficients may be confusing
	 We may now begin to reason about the sign of $\alpha_{k-1}(n)$ for $n\geq2k+1$ to complete the proof of Theorem \ref{thm:n-k-1}.
     \begin{proof}[proof of Theorem \ref{thm:n-k-1}]
	 By specializing Lemma \ref{lemalphakm1}, we have that if $n=1$ then $\alpha_{k-1}(k,n)=k$. Thus, by applying Lemma \ref{lemalphasym}, we get that if $n=2k+1$ then $\alpha_{k-1}(k,n)=k$, which is clearly a positive number. Next, from Lemma \ref{lemalphakm1} we have for all $n \in \lbrace1,2,...,k-1\rbrace$, $\operatorname*{sgn}(\alpha_{k-1}(k,n))=-\operatorname{sgn}(\alpha_{k-1}(n+1))$. Since $\alpha_{k-1}(k,n)$ is a sum of continuous functions, by the Intermediate Value Theorem $\alpha_{k-1}(k,n)$ must have a root somewhere in each of the open regions $(1,2),(2,3), ... ,(k-1,k)$. By application of Lemma \ref{lemalphasym} we may also conclude that $\alpha_{k-1}(k,n)$ has a root somewhere in the regions $(k+2,k+3),(k+3,k+4),...,(2k,2k+1)$ accounting for $2k-2$ total roots.

     But, the degree of $\alpha_{k-1}(k,n)$ is $2k-2$.  Thus, all roots of $\alpha_{k-1}(k,n)$ occur in the open region $(1,2k+1)$. So, since $\alpha_{k-1}(k,n)$ is positive when $n=2k+1$ we must have that $\alpha_{k-1}(k,n)$ is also positive for $n>2k+1$. 

	Finally, we may examine the coefficient of the $x^{n-2}(x+1)$ term in the magic positive basis expansion of our full polynomial.
    %JS converted to \coef notation for consistency
	 \begin{eqnarray*}
\lefteqn{\coef_{x^{n-2}(x+1)}\big(Q_{k,n}((n-k-1)x)S_{k,n}((n-k-1)x)\big)}\\
	 	 &=&\coef_{x^{n-k}}Q_{k,n}((n-k-1)x)\cdot
         \coef_{x^{k-2}(x+1)}S_{k,n}(n-k-1(x))+\\
	 &&\coef_{x^{n-k-1}(x+1)}Q_{k,n}((n-k-1)x)\cdot \coef_{x^{k-1}}S_{k,n}(n-k-1(x)).
%\lefteqn{[x^{n-2}(x+1)]\big(Q_{k,n}((n-k-1)x)S_{k,n}((n-k-1)x)\big)}\\
%	 	 &=&[x^{n-k}]Q_{k,n}((n-k-1)x)\cdot [x^{k-2}(x+1)]S_{k,n}(n-k-1(x))+\\
%	 &&[x^{n-k-1}(x+1)]Q_{k,n}((n-k-1)x)\cdot [x^{k-1}]S_{k,n}(n-k-1(x)).
     \end{eqnarray*}

	 From earlier discussion, the first term vanishes since $\coef_{x^{n-k}}Q_{k,n}((n-k-1)x)=0$, while the second term is a product of a negative number and a positive number. Hence
	 \[
	 \coef_{x^{n-2}(x+1)}\big(Q_{k,n}((n-k-1)x)S_{k,n}((n-k-1)x)\big)<0,
	 \]
	 so $D_{k,n}((n-k-1)x)$ fails to exhibit magic positivity for $1\leq k <\lfloor\frac{n}{2}\rfloor$.
     \end{proof}
     
 \section{The $n-k$ dilate of $D_{k,n}$}
In this section, we express the Ehrhart polynomial of the $n-k$ dilate of the base polytope of minimal matroid of dimension $n$, rank $k$, under the magic positive basis, thus establishing Theorem \ref{thm: asymptotic}. 

Similar to our analysis for $D_{k,n}((n-k-1)x)$, to derive the signs of the coefficients of $D_{k,n}((n-k)x)$ in the magic positive basis $\{x^i(x+1)^{d-i}\}_{i=0}^d$, we first rewrite the Ehrhart polynomial of the $n-k$ dilate of the base polytope in the following form.
\[D_{k,n}((n-k)x)\propto Q_{k,n}((n-k)x)\cdot S_{k,n}((n-k)x).\]
for $Q_{k,n}(x)$ and $S_{k,n}(x)$ defined in Equations (\ref{defQkn}) and (\ref{defSkn}), respectively.

%\begin{align*} 
%D_{k,n}((n-k)x) &=\frac{1}{{{n-1}\choose{k-1}}}{{(n-k)x+n-k}\choose {n-k}}\sum_{j=0}^{k-1}{{{n-k-1+j}\choose j}{{{(n-k)x+j}}\choose j}}
% \\ &\propto \binom{(n-k)x+n-k}{n-k}\sum_{j=0}^{k-1}\binom{n-k-1+j}{j}\binom{(n-k)x+j}{j}
%\end{align*}
%For convenience, define
%$$Q_{k,n}((n-k)x):=\binom{(n-k)x+n-k}{n-k}$$
%and
%$$S_{k,n}((n-k)x):=\sum_{j=0}^{k-1}\binom{n-k-1+j}{j}\binom{(n-k)x+j}{j}$$
%where
%$$D_{k.n}((n-k)x)\propto Q_{k,n}((n-k)x)\cdot S_{k,n}((n-k)x).$$
We will study the respective coefficients of each polynomial in the magic positive basis.

\begin{lemma}
\label{lemQkn}
The magic positive expansion of $Q_{k,n}((n-k)x)$ is given by
\[Q_{k,n}((n-k)x)=\sum_{s=0}^{n-k}\gamma_s x^s(1+x)^{n-k-s}\]
where $\gamma_s$ is given by
\[\gamma_s=\coef_{x^{s}}(\prod_{p=1}^{n-k}(1+(\frac{n-k}{p}-1)x)).\]
\end{lemma}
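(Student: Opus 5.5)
Write $m=n-k$. The plan is to factor $Q_{k,n}(mx)$ into linear pieces, write each piece in the basis $\{x,\,x+1\}$, and then convert the product to a single homogeneous polynomial whose coefficients are the $\gamma_s$.

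First, expand the binomial coefficient as a product of linear factors:
\[
Q_{k,n}(mx)=\binom{mx+m}{m}=\frac{1}{m!}\prod_{p=1}^{m}(mx+p).
\]
This is the same product used in Section~\ref{secnk1}, now with dilation $m$ instead of $m-1$.

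Next, rewrite each factor in the basis $\{x,\,x+1\}$:
\[
mx+p = p(x+1)+(m-p)x .
\]
After dividing by $p$, this becomes $(x+1)+\bigl(\tfrac{m}{p}-1\bigr)x$. Since $\prod_{p=1}^m p=m!$, the factor $\frac{1}{m!}$ cancels exactly, and we get
\[
Q_{k,n}(mx)=\prod_{p=1}^{m}\Bigl((x+1)+\bigl(\tfrac{m}{p}-1\bigr)x\Bigr).
\]

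Now expand the product. From each factor we choose either the $(x+1)$-summand or the $x$-summand. Every choice yields a monomial of the form $x^s(x+1)^{m-s}$, where $s$ is the number of factors contributing $x$. Its coefficient is the elementary symmetric function
\[
e_s\bigl(\tfrac{m}{1}-1,\dots,\tfrac{m}{m}-1\bigr).
\]
This is exactly $\coef_{x^s}\prod_{p=1}^{m}\bigl(1+(\tfrac{m}{p}-1)x\bigr)$, the formula claimed for $\gamma_s$.

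Finally, uniqueness of the expansion in the basis $\{x^s(x+1)^{m-s}\}$ shows these are the magic coefficients. Equivalently, the identification can be seen by substituting $y=x/(x+1)$ and dividing by $(x+1)^m$.

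The only step needing care is the bookkeeping: checking that the normalizing constants cancel, so that the identity holds exactly rather than only up to proportionality. No real obstacle is expected.
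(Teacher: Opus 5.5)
Your proposal is correct and follows essentially the same route as the paper. Both factor $\binom{mx+m}{m}$ into the linear pieces $p(x+1)+(m-p)x$, absorb the $1/m!$ by dividing each factor by $p$, and read off $\gamma_s$ as $e_s\bigl(\tfrac{m}{1}-1,\dots,\tfrac{m}{m}-1\bigr)$ by taking the $x$-summand from exactly $s$ factors; the only difference is that the paper gets the factorization by citing Konoike's Proposition 5.2 (indexed by $j=m-p$), whereas you verify it directly.
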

\begin{proof}
Let $m=n-k$. By specializing Proposition 5.2 in Konoike \cite{konoike2025magicpositivityehrhartpolynomials} to the $m$-dimensional standard simplex with dilation factor $m$, we obtain
\[
Q_{k,n}((n-k)x)=\binom{mx+m}{m}=\frac{1}{m!}\prod_{j=0}^{m-1}\left((m-j)(x+1)+jx\right).
\]
Relabeling the index by $p=m-j$, this becomes
\[
Q_{k,n}((n-k)x)=\prod_{p=1}^{m}\left((x+1)+\left(\frac{m}{p}-1\right)x\right).
\]
Now expand this product in the magic positive basis $\{x^s(x+1)^{m-s}\}_{s=0}^{m}$. To obtain the term $x^s(x+1)^{m-s}$, we must choose the $x$-summand from exactly $s$ of the $m$ factors and the $(x+1)$-summand from the remaining $m-s$ factors. Hence its coefficient is
\[
\gamma_s=\sum_{1\leq p_1<\cdots<p_s\leq m}\prod_{\ell=1}^{s}\left(\frac{m}{p_\ell}-1\right),
\]
which is exactly the coefficient of $x^s$ in
\[
\prod_{p=1}^{m}\left(1+\left(\frac{m}{p}-1\right)x\right).
\]
Therefore,
\[
Q_{k,n}((n-k)x)=\sum_{s=0}^{m}\gamma_s x^s(x+1)^{m-s}=
\sum_{s=0}^{n-k}\gamma_s x^s(1+x)^{n-k-s},
\]
as desired.
\end{proof}

Next, we turn our attention to the $S_{k,n}((n-k)x)$ term:
\begin{lemma}
\label{lemSkn}
The coefficients of $S_{k,n}((n-k)x)$ in the magic positive basis (Equation (\ref{eqdilS})) are
\[\alpha_r=\coef_{x^r}\left((1-x)^{k-1}\sum_{j=0}^{k-1}\binom{n-k-1+j}{j}\frac{1}{j!}\frac{1}{(1-x)^{j}}\prod_{i=0}^{j-1}\left(j-i+(i-j+n-k)x\right)\right).\]
\end{lemma}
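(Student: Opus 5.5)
The plan is to use the substitution $x = y/(1-y)$, which turns the magic basis into the monomial basis. This avoids working with the triple sum of Lemma~\ref{lemdilS} directly. Write $d=k-1$ and $f(x)=S_{k,n}((n-k)x)$, a polynomial of degree at most $k-1$. By Lemma~\ref{lemdilS} (or simply because $\{x^r(x+1)^{d-r}\}_{r=0}^{d}$ is a basis of polynomials of degree $\le d$), we have $f(x)=\sum_{r=0}^{k-1}\alpha_r x^r(x+1)^{k-1-r}$ with uniquely determined $\alpha_r$.

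First, substitute $x=y/(1-y)$ as an identity of rational functions, or of formal power series in $y$. Since $x+1 = 1/(1-y)$, each basis element becomes
\[
x^r(x+1)^{k-1-r} = y^r(1-y)^{-(k-1)}.
\]
Hence
\[
\sum_{r=0}^{k-1}\alpha_r y^r = (1-y)^{k-1} f\!\left(\frac{y}{1-y}\right).
\]
So $\alpha_r$ is the coefficient of $y^r$ on the right-hand side, and it remains only to compute $f(y/(1-y))$ explicitly.

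Second, expand each summand of $f$. Write
\[
\binom{(n-k)x+j}{j}=\frac{1}{j!}\prod_{i=0}^{j-1}\bigl((n-k)x+j-i\bigr).
\]
Under the substitution, each factor becomes
\[
(n-k)\frac{y}{1-y}+(j-i)=\frac{(j-i)+(i-j+n-k)y}{1-y}.
\]
The $j$ factors together contribute $(1-y)^{-j}\prod_{i=0}^{j-1}\bigl(j-i+(i-j+n-k)y\bigr)$. Multiplying by $\binom{n-k-1+j}{j}$, summing over $0\le j\le k-1$, multiplying by $(1-y)^{k-1}$, and renaming $y$ back to $x$ gives exactly the expression in the statement.

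The only point needing care is well-definedness. Since $j\le k-1$, each term $(1-x)^{k-1-j}\prod_i(\cdots)$ is a genuine polynomial of degree at most $k-1$, so taking $\coef_{x^r}$ is unambiguous and no power-series issues arise. The identity in the first step holds for all $y\neq 1$, so the two polynomials in $y$ agree and their coefficients match. I expect no real obstacle here; the main thing is keeping the sign bookkeeping $(i-j+n-k)$ consistent with the statement.
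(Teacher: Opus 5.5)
Your argument is correct, and it reaches the formula by a more direct route than the paper does.

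The paper starts from the triple-sum formula of Lemma~\ref{lemdilS} with $\sigma=n-k$. It rewrites $(-1)^{r-t}\binom{k-1-t}{r-t}$ as the constant term of $x^{-r}(1-x)^{k-1}x^{t}(1-x)^{-t}$ and interchanges the sums. It then collapses the $t$-sum by the binomial theorem into $\bigl(j+\frac{(n-k)x}{1-x}\bigr)^{i}$, and the $i$-sum through the Stirling numbers $s(j,i)$ into the falling factorial $\bigl(j+\frac{(n-k)x}{1-x}\bigr)_{j}$. So the paper first expands $\binom{(n-k)x+j}{j}$ into monomials and each monomial into the magic basis (that is Lemma~\ref{lemdilS}). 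It then undoes both expansions: the constant-term device applies $x^{t}\mapsto x^{t}(1-x)^{-t}$ monomial by monomial, and the binomial and Stirling identities reassemble the product.

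Your substitution $x=y/(1-y)$ applies that same transformation to the whole polynomial in one step. The Stirling expansion and its re-collapse never occur, and the only fact you need about the $\alpha_r$ is uniqueness of the basis expansion.

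What each route buys:
\begin{itemize}
\item The paper's route reuses the explicit coefficient formula of Lemma~\ref{lemdilS}, which also drives Section~\ref{secnk1}.
\item Yours is shorter and works verbatim with $n-k$ replaced by any dilation $\sigma$. It also isolates the general principle that the magic-basis coefficients of a polynomial $f$ of degree at most $d$ are the coefficients of $(1-y)^{d}f\bigl(y/(1-y)\bigr)$.
\end{itemize}
That viewpoint also yields $\alpha_{d}=(-1)^{d}f(-1)$ at once (evaluate the expansion at $x=-1$). This is precisely what the first chain of equalities in the proof of Lemma~\ref{lemalphakm1} computes.
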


\begin{proof}
By Lemma \ref{lemdilS} for $\sigma = (n-k)$, we have
\[\alpha_r=\sum_{j=0}^{k-1}\binom{n-k-1+j}{j}\frac{1}{j!}\sum_{i=0}^{j}s(j,i)\sum_{t=0}^{\min(i,r)}\binom{i}{t}(n-k)^t j^{i-t} (-1)^{r-t}\binom{k-1-t}{r-t}.\]

We now further manipulate this expression for $\alpha_r$. 
To simplify the upper bound of summation $\min(i,r)$, we claim that it can be equivalently replaced by $i$, since for all $t>r$ we have $r-t<0$ and $k-1-t\geq 0$, yielding $\binom{k-1-t}{r-t}=0$, meaning all those terms are annihilated.
By the above observation, we can derive
\[\alpha_r=\sum_{j=0}^{k-1}\binom{n-k-1+j}{j}\frac{1}{j!}\sum_{i=0}^{j}s(j,i)\sum_{t=0}^{i}\binom{i}{t}(n-k)^t j^{i-t} (-1)^{r-t}\binom{k-1-t}{r-t}.\]
Note that by Binomial Theorem
\begin{align*}(-1)^{r-t}\binom{k-1-t}{r-t} &=\coef_{x^{r-t}}((1-x)^{k-1-t})\\
&=\const_x(x^{-(r-t)}(1-x)^{k-1-t})\\
&=\const_x(x^{-r}(1-x)^{k-1}x^t(1-x)^{-t}).\\
\end{align*}
Substituting the above into the expression for $\alpha_r$, and manipulating with the constant term yields
\begin{align*}
\alpha_r&=\sum_{j=0}^{k-1}\binom{n-k-1+j}{j}\frac{1}{j!}\sum_{i=0}^{j}s(j,i)\sum_{t=0}^{i}\binom{i}{t}(n-k)^t j^{i-t} (-1)^{r-t}\binom{k-1-t}{r-t}\\
&=\sum_{j=0}^{k-1}\binom{n-k-1+j}{j}\frac{1}{j!}\sum_{i=0}^{j}s(j,i)\sum_{t=0}^{i}\binom{i}{t}(n-k)^t j^{i-t} \const(x^{-r}(1-x)^{k-1}x^t(1-x)^{-t})\\
&=\const_x(\sum_{j=0}^{k-1}\binom{n-k-1+j}{j}\frac{1}{j!}\sum_{i=0}^{j}s(j,i)\sum_{t=0}^{i}\binom{i}{t}(n-k)^t j^{i-t} x^{-r}(1-x)^{k-1}x^t(1-x)^{-t})\\
&=\const_x\left(x^{-r}(1-x)^{k-1}\sum_{j=0}^{k-1}\binom{n-k-1+j}{j}\frac{1}{j!}\sum_{i=0}^{j}s(j,i)\sum_{t=0}^{i}\binom{i}{t}(n-k)^t j^{i-t} x^t(1-x)^{-t}\right)\\
&=\const_x\left(x^{-r}(1-x)^{k-1}\sum_{j=0}^{k-1}\binom{n-k-1+j}{j}\frac{1}{j!}\sum_{i=0}^{j}s(j,i)\sum_{t=0}^{i}\binom{i}{t}(n-k)^t j^{i-t} x^t(1-x)^{-t}\right)\\
&=\const_x\left(x^{-r}(1-x)^{k-1}\sum_{j=0}^{k-1}\binom{n-k-1+j}{j}\frac{1}{j!}\sum_{i=0}^{j}s(j,i)\sum_{t=0}^{i}\binom{i}{t} j^{i-t} \left(\frac{(n-k)x}{1-x}\right)^t\right)\\
&=\coef_{x^r}\left((1-x)^{k-1}\sum_{j=0}^{k-1}\binom{n-k-1+j}{j}\frac{1}{j!}\sum_{i=0}^{j}s(j,i)\sum_{t=0}^{i}\binom{i}{t} j^{i-t} \left(\frac{(n-k)x}{1-x}\right)^t\right)\\
\end{align*}
By Binomial Theorem,
\[\alpha_r=\coef_{x^r}\left((1-x)^{k-1}\sum_{j=0}^{k-1}\binom{n-k-1+j}{j}\frac{1}{j!}\sum_{i=0}^{j}s(j,i)\left(j+\frac{(n-k)x}{1-x}\right)^i\right)\]
By definition of the signed Stirling numbers of the first kind and of the falling factorial,
\begin{align*}
\sum_{i=0}^{j} s(j,i)\left(j+\frac{(n-k)x}{1-x}\right)^i= \left(j+\frac{(n-k)x}{1-x}\right)_j=\prod_{i=0}^{j-1}\left(j+\frac{(n-k)x}{1-x}-i\right)
\end{align*}
\begin{align*}
=\prod_{i=0}^{j-1}\frac{(j-i)(1-x)+(n-k)x}{1-x}\
&= \frac{1}{(1-x)^j}\prod_{i=0}^{j-1}\left(j-i+(i-j+n-k)x\right).
\end{align*}
Thus, the expression for $\alpha_r$ is 
\[\alpha_r=\coef_{x^r}\left((1-x)^{k-1}\sum_{j=0}^{k-1}\binom{n-k-1+j}{j}\frac{1}{j!}\frac{1}{(1-x)^{j}}\prod_{i=0}^{j-1}\left(j-i+(i-j+n-k)x\right)\right).\]
\end{proof}
Combining the results of Lemma \ref{lemQkn} and \ref{lemSkn}, we obtain an expression for the whole Ehrhart polynomial:
\begin{align*}
D_{k,n}((n-k)x)&\propto Q_{k,n}((n-k)x) S_{k,n}((n-k)x)\\
&=\left (\sum_{s=0}^{n-k} \gamma_s x^s(1+x)^{n-k-s}\right)\cdot\left(\sum_{r=0}^{k-1}\alpha_rx^r(x+1)^{k-1-r}\right)
\end{align*}
where it is obvious that, since in both bases the coefficients of the polynomial products are expanded in the same manner, the multiplication of the two polynomials results in the discrete convolution of their respective coefficients:

\begin{align*}
\coef_{x^u(1+x)^{n-1-u}}(D_{k,n}((n-k)x))&\propto \coef_{x^u(1+x)^{n-1-u}}(Q_{k,n}((n-k)x) S_{k,n}((n-k)x))\\
=&\coef_{x^u(1+x)^{n-1-u}}\left (\sum_{s=0}^{n-k} \gamma_s x^s(1+x)^{n-k-s}\right)\cdot\left(\sum_{r=0}^{k-1}\alpha_rx^r(x+1)^{k-1-r}\right)\\
=&\sum_{a+b=u,0\leq a\leq n-k,0\leq b\leq k-1}\gamma_a\alpha_b
\end{align*}
\begin{align*}
=&\sum_{a+b=u,0\leq a\leq n-k,0\leq b\leq k-1}\coef_{x^{a}}(\prod_{p=1}^{n-k}(1+(\frac{n-k}{p}-1)x))\cdot\\
&\coef_{x^b} \left((1-x)^{k-1}\sum_{j=0}^{k-1}\binom{n-k-1+j}{j}\frac{1}{j!}\frac{1}{(1-x)^{j}}\prod_{i=0}^{j-1}\left(j-i+(i-j+n-k)x\right)\right)\\
=&\coef_{x^{u}}\left(\prod_{p=1}^{n-k}(1+(\frac{n-k}{p}-1)x)\right)\times\\
&(1-x)^{k-1}\sum_{j=0}^{k-1}\binom{n-k-1+j}{j}\frac{1}{j!}\frac{1}{(1-x)^{j}} \prod_{i=0}^{j-1}\left(j-i+(i-j+n-k)x\right).
\end{align*}
Letting
\begin{align*}
A(x)&:=\left(\prod_{p=1}^{n-k}(1+(\frac{n-k}{p}-1)x)\right) \text{ and}\\
B(x)&:=\left((1-x)^{k-1}\sum_{j=0}^{k-1}\binom{n-k-1+j}{j}\frac{1}{j!}\frac{1}{(1-x)^{j}}\prod_{i=0}^{j-1}\left(j-i+(i-j+n-k)x\right)\right),
\end{align*}
we have that the polynomial $D_{k,n}((n-k)x)$
is magic positive if and only if  all coefficients of 
\[A(x)\cdot B(x)\]
are positive.

We now turn to the asymptotic magic positivity of the $n-k$ dilate by showing that Konoike's conjecture holds for sufficiently large $n$.
\begin{lemma}\label{lem:gamma:asymp}
As $n$ approaches infinity, we eventually have
\[\gamma_s>0\]
for $0\leq s\leq n-k$.
\end{lemma}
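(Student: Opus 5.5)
The plan is to read $\gamma_s$ off the product formula of Lemma~\ref{lemQkn} and argue factor by factor, writing $m=n-k$. By Lemma~\ref{lemQkn},
\[
\gamma_s=\coef_{x^s}\prod_{p=1}^{m}\Bigl(1+\bigl(\tfrac{m}{p}-1\bigr)x\Bigr)=e_s\Bigl(\tfrac{m}{1}-1,\tfrac{m}{2}-1,\ldots,\tfrac{m}{m}-1\Bigr),
\]
where $e_s$ denotes the $s$-th elementary symmetric function. The whole argument is a sign analysis of these weights $w_p:=\frac{m}{p}-1$.

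First I will observe that $w_p\ge 0$ for every $1\le p\le m$, so every monomial in $e_s$ is nonnegative and $\gamma_s\ge 0$ for all $s$. This already holds for every $n$, with no asymptotics needed.

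Next, for strictness I will split according to whether $s\le m-1$ or $s=m$.
\begin{itemize}
\item For $0\le s\le m-1$: the weights $w_1,\ldots,w_{m-1}$ are strictly positive. The monomial $w_1w_2\cdots w_s$ is one term of $e_s$, and every other term is nonnegative, so $\gamma_s\ge w_1\cdots w_s>0$. The case $s=0$ gives $\gamma_0=1$.
\item For $s=m$: there is only one monomial, $\gamma_m=\prod_{p=1}^m w_p$, and it contains the factor $w_m=\frac{m}{m}-1$.
\end{itemize}

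The main obstacle is the top index $s=m=n-k$, and I must state plainly what the formula gives there. Since $w_m=0$, we get $\gamma_{n-k}=0$ exactly, for every $n$; this matches the vanishing of the pure top term found in Section~\ref{secnk1}. So the displayed product yields strict positivity of $\gamma_s$ for all $0\le s\le n-k-1$ and nonnegativity at $s=n-k$.

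The strict inequality at $s=n-k$, as literally worded, cannot come out of this factorization, because $\gamma_{n-k}=0$ independently of $n$. I will therefore verify that the later use of the lemma only requires two things. The first is $\gamma_s\ge 0$ for all $s$, which is what magic positivity asks of the coefficients of $A(x)$. The second is strict positivity in degrees $0\le s\le \deg A=n-k-1$.

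If that check fails, the statement would need that correction. I would then first try to recover genuine positivity at $s=n-k$ by re-examining the normalization of $Q_{k,n}$ in Lemma~\ref{lemQkn}, in particular the $\frac{1}{m!}$ factor and the relabeling $p=m-j$. That is the only place where the top coefficient could differ from $\prod_p w_p$.
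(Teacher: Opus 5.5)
Your argument is correct, and it proves the statement in its correct form. With $\gamma_s=e_s(w_1,\dots,w_{n-k})$ and $w_p=\frac{n-k}{p}-1$, the facts $w_p>0$ for $p<n-k$ and $w_{n-k}=0$ give $\gamma_s>0$ for $0\le s\le n-k-1$ and $\gamma_{n-k}=0$. This holds for every $n$, with no limit taken.

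The paper argues differently, and asymptotically. It replaces each weight $\frac{n-k}{p}-1$ by $\frac{n}{p}$ and asserts $\gamma_s\sim n^s e_s(1,\tfrac12,\dots,\tfrac{1}{n-k})>0$. That replacement is valid only for fixed $p$, not uniformly over the $n-k$ factors. Moreover, the lemma quantifies over a range of $s$ that grows with $n$, and near the top the estimate is simply wrong:
\begin{itemize}
\item Your formula gives $\gamma_{n-k-1}=\prod_{p=1}^{n-k-1}\frac{n-k-p}{p}=1$, while the claimed asymptotic $n^{n-k-1}e_{n-k-1}(1,\tfrac12,\dots,\tfrac{1}{n-k})$ tends to infinity.
\item $\gamma_{n-k}=0$, which can also be seen without the factorization: evaluating the magic expansion at $x=-1$ gives $\gamma_{n-k}=(-1)^{n-k}Q_{k,n}(-(n-k))=(-1)^{n-k}\binom{0}{n-k}=0$.
\end{itemize}
So your exact sign analysis is both simpler and more accurate than the paper's route. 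It also correctly shows that the lemma as worded fails at $s=n-k$.

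On your last two paragraphs: the check you describe does succeed, so state the corrected lemma outright rather than conditionally. Magic positivity only requires nonnegative coefficients. Hence for Theorem~\ref{thm: asymptotic} it suffices that all $\gamma_s\ge 0$ and all $\alpha_r\ge 0$. The convolution $\sum_{a+b=u}\gamma_a\alpha_b$ is then nonnegative, and its top value $\gamma_{n-k}\alpha_{k-1}$ is $0$, consistent with $D_{k,n}(-(n-k))=0$. Your contingency of re-examining the $\frac{1}{m!}$ normalization could not have recovered strictness anyway. The vanishing at $x=-1$ is unaffected by any positive rescaling of $Q_{k,n}$.
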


\begin{proof}
As $n$ goes to infinity,
\[Q'_{k,n}((n-k)x)\sim \sum_{p=1}^{n-k}(1+\frac{n}{p}x)\]
and thus
\[\gamma_s\sim n^s e_s(1,\frac{1}{2},....,\frac{1}{n-k})>0\]
where $e_s$ is the elementary symmetric sum.
\end{proof}

\begin{lemma}\label{lem:beta:asymp}
As $n$ approaches infinity, we eventually have
\[\alpha_r>0\]
for all $0\leq r\leq k-1$.
\end{lemma}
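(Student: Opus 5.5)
We will work directly with the closed form of Lemma~\ref{lemSkn}. Set $N=n-k$, so that $N\to\infty$ as $n\to\infty$ with $k$ fixed. Writing $m=j-i$, the product in Lemma~\ref{lemSkn} becomes $\prod_{m=1}^{j}\bigl(m+(N-m)x\bigr)$. Hence
\[
\alpha_r=\sum_{j=0}^{k-1}\binom{N-1+j}{j}\frac{1}{j!}\sum_{s=0}^{\min(j,r)}\coef_{x^s}\Bigl(\prod_{m=1}^{j}\bigl(m+(N-m)x\bigr)\Bigr)\,(-1)^{r-s}\binom{k-1-j}{r-s}.
\]
Because $k$ is fixed, this is a finite sum. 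Each summand is a polynomial in $N$, so $\alpha_r$ is a polynomial in $N$ whose coefficients depend only on $k$ and $r$. It therefore suffices to show that its leading coefficient is positive.

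First, we record the degree in $N$ of each piece.
\begin{itemize}
\item The factor $\binom{N-1+j}{j}$ is a polynomial in $N$ of degree $j$ with leading coefficient $1/j!$.
\item The coefficient $\coef_{x^s}\prod_{m=1}^{j}\bigl(m+(N-m)x\bigr)$ is obtained by choosing the $x$-summand from $s$ factors and the constant $m$ from the other $j-s$ factors. It is therefore a polynomial in $N$ of degree exactly $s$, with leading coefficient $e_{j-s}(1,2,\dots,j)$. This equals the unsigned Stirling number $c(j+1,s+1)$, which is strictly positive for $0\le s\le j$.
\end{itemize}
Consequently the $(j,s)$ summand has degree $j+s$ in $N$. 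Its leading coefficient is
\[
\frac{1}{(j!)^2}\,c(j+1,s+1)\,(-1)^{r-s}\binom{k-1-j}{r-s}.
\]

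Next, we locate the dominant summand. The constraints are $j\le k-1$ and $s\le r$, so $j+s\le k-1+r$, with equality only for $j=k-1$ and $s=r$. This summand is admissible, since $r\le k-1$ gives $s=r\le j$. For it the factor $(-1)^{r-s}\binom{k-1-j}{r-s}$ equals $\binom{0}{0}=1$, so no sign cancellation can occur. Every other summand has degree at most $k+r-2$. It follows that
\[
\alpha_r=\frac{c(k,r+1)}{((k-1)!)^2}\,N^{k-1+r}+O\bigl(N^{k+r-2}\bigr),
\]
and the leading coefficient is positive. Hence $\alpha_r>0$ for all sufficiently large $n$. Since there are only $k$ values of $r$, we can take a single threshold valid for all $0\le r\le k-1$ simultaneously.

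The main point needing care is the middle step: checking that the top-degree coefficient of the product is exactly $e_{j-s}(1,\dots,j)$ and that it is nonzero. The crux is then that the maximal-degree pair $(j,s)=(k-1,r)$ is unique and carries a nonnegative sign factor. Once these are verified, the remaining terms are of strictly lower order and the result follows.
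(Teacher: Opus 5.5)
Your proposal is correct and follows the paper's dominant-term argument: the unique top-degree contribution to $\alpha_r$ comes from $j=k-1$ with all of $x^r$ taken from the product, which has leading term $\binomsq{k}{r+1}N^{k-1+r}/((k-1)!)^2$. Your version is slightly more rigorous, since you track exact degrees and leading coefficients of polynomials in $N=n-k$, where the paper instead replaces $\binom{n-k-1+j}{j}$ by $n^j/j!$ and $m+(n-k-m)x$ by $m+nx$ informally under $\sim$.
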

\begin{proof}
Suppose $0\leq r<k$. We consider the asymptotic of $\alpha_r$ when $n$ approaches infinity.
The coefficients $\alpha_r$ are generated as coefficients of the polynomial
\[B(x)=\sum_{j=0}^{k-1}\binom{n-k-1+j}{j}\frac{1}{j!}(1-x)^{k-1-j}\prod_{i=0}^{j-1}\left(j-i+(i-j+n-k)x\right).\]
Reindexing by $m:=j-i$ we have
\[\prod_{i=0}^{j-1}\left(j-i+(i-j+n-k)x\right)=\prod_{m=1}^{j}\left(m+(-m+n-k)x\right)\sim \prod_{m=1}^j (m+nx).\]

By definition of the unsigned Stirling numbers of the first kind $\binomsq{i}{j}$ \cite[Chapter 6]{knuth:concrete},
\[\prod_{m=1}^j (m+nx)=\frac{1}{nx}(nx)^{\overline{j+1}}=\frac{1}{nx}\sum_{m=0}^{j+1}\binomsq{j+1}{m}n^{m}x^m=\sum_{m=0}^{j}\binomsq{j+1}{m+1}n^{m}x^{m}\]
since $\binomsq{j+1}{0}=0$.
By Binomial theorem,
\[(1-x)^{k-1-j}=\sum_{i=0}^{k-1-j}\binom{k-1-j}{i}(-x)^{i}.\]

Thus, rearranging the summations we have
\begin{align*}
B(x)&\sim \sum_{j=0}^{k-1}\frac{(n-k-1+j)_{j}}{j!}\frac{1}{j!}\sum_{i=0}^{k-1-j}\binom{k-1-j}{i}(-x)^{i}\sum_{m=0}^{j}\binomsq{j+1}{m+1}n^{m}x^{m}\\
&\sim \sum_{i\geq 0,j\geq 0,i+j\leq k-1}\sum_{m=0}^j\frac{n^{j+m}}{(j!)^2} \binom{k-1-j}{i}(-1)^{i}\binomsq{j+1}{m+1}x^{i+m}\\
&\sim\frac{1}{((k-1)!)^2}\sum_{m=0}^{k-1} \binomsq{k}{m+1}n^{k-1+m}x^{m}\\
\end{align*}
since for each fixed $0\leq d\leq k-1$ where $i+m=d$,  thus $i+j=(d-m)+j\leq k-1$, then we have the bound $j+m=(j-m)+2m\leq (k-1-d)+2d=k-1+d$ which is achieved at $j=k-1$, $m=d$, $i=0$, yielding the dominant term of the coefficient of $x^d$. Thus, when $n$ approaches infinity,
\[\alpha_r\sim \frac{n^{k-1+m}}{((k-1)!)^2}\binomsq{k}{m+1}>0\]
for all $0\leq r\leq k-1$. 

\end{proof}

We are now ready to collect these results.

\begin{proof}[proof of Theorem \ref{thm: asymptotic}]
It follows directly from Lemma \ref{lem:gamma:asymp} and Lemma \ref{lem:beta:asymp}, that the Ehrhart polynomial $D_{k,n}((n-k)x)$ is magic positive for sufficiently large $n$. By our work in Section \ref{secnk1}, the $n-k-1$ dilate of $D_{k,n}$ is not magic positive for $1\leq k<\lfloor\frac{n}{2}\rfloor$. Then, we can conclude that Konoike's conjecture holds for sufficiently large $n$, proving Theorem \ref{thm: asymptotic}. 
\end{proof}

%added a bit more, so it makes sense to have it as a separate section
\section{Numerical Results}

Theorem \ref{thm: sharp} can be shown through brute-force with the help of a symbolic calculation software (such as Maple). It suffices to expand the expression
\begin{align*}
&\left( \prod_{i=1}^{n-k}\left( iy+\left( n-k-i\right) x\right) \right) \sum_{j=0}^{k-1}\binom{n-k-1+j}{j}\left( y-x\right) ^{k-1-j}\times \\
&\frac{1}{j!}\left( \prod_{\ell =1}^{j}\left( \ell y+\left( n-k-\ell \right) x\right) \right)
\end{align*}
for specific values of $n$ and $k$, as a polynomial in $y$ and $x$ and verify if each coefficient is positive. (Here, $y$ stands for $(x+1)$ --- this relabeling guides the symbolic engine towards remaining in the magic positive basis.) We report here the lowest order example, which is sufficiently simple to be verifiable manually.
\begin{example}
Note that for $n=3, k=2$,
\[
D_{2,3}((3-2)x) \propto 2(x+1)^2-(x+1)x,
\]
while for $n=3,k=1$,
\[
D_{1,3}((3-1)x) \propto (x+1)^2+(x+1)x.
\]
% So it appears we don't have to go to n=13 to make our point.
% Or did I misunderstand something?
% Note that when $n=13,k=7$,
% $$\left(\prod_{p=1}^{n-k}(1+(\frac{n-k}{p}-1)x)\right)\cdot\left((1-x)^{k-1}\sum_{j=0}^{k-1}\binom{n-k-1+j}{j}\frac{1}{j!}\frac{1}{(1-x)^{j}}\prod_{i=0}^{j-1}\left(j-i+(i-j+n-k)x\right)\right)$$
% $$=-x^{11}-\frac{2431}{10}x^{10}-\frac{89529}{50}x^9+4785 x^8+\frac{4125099}{50}x^7+\frac{7754439}{25}x^6+\frac{5612211}{10}x^5+\frac{13933953}{25}x^4$$
% $$+\frac{15644607}{50}x^3+\frac{4848833}{50}x^2+\frac{75773}{5}x+924.$$
% %JS: what is 'x' in the above expression? The same x as in D_{k,n}(x) ?? -- No, replace x^m :=x^m(x+1)^{n-k-m}
% %JS: OK!
% %JS: I don't see how below follows from just above... k=7 vs k=8
% By our previous result, this implies that
% $$D_{8,13}((13-8)x)\propto Q_{k,n}((n-k)x) S_{k,n}((n-k)x)$$
% $$=-x^{11}(x+1)-\frac{2431}{10}x^{10}(x+1)^2-\frac{89529}{50}x^9(x+1)^3+4785 x^8(x+1)^4+\frac{4125099}{50}x^7(x+1)^5$$
% $$+\frac{7754439}{25}
% x^6(x+1)^6+\frac{5612211}{10}x^5(x+1)^7+\frac{13933953}{25}x^4(x+1)^8
% +\frac{15644607}{50}x^3(x+1)^9$$
% $$+\frac{4848833}{50}x^2(x+1)^{10}+\frac{75773}{5}x(x+1)^{11}+924(x+1)^{12}.$$
% Since there are several negative coefficients, by definition, $D_{8,13}((13-8)x)$ is not magic positive.
% \\
% %JS: rephrased to be clear OK
Thus, magic positivity does not hold for $k>\lfloor\frac{n}{2}\rfloor$ in general, and the bound on $k$ provided by Konoike's conjecture is therefore sharp.

\end{example}

\section{Conclusion}
In this paper, we have analyzed the magic positivity of the Ehrhart polynomial
of dilated base polytopes of minimal matroids, working from the known expression
for the lattice point enumerator $D_{k,n}(x)$ due to Konoike
\cite{konoike2025magicpositivityehrhartpolynomials}.  Our results provide
substantial progress toward a complete answer to Question~1.1.

Theorem~\ref{thm:n-k-1} establishes that the $m$-index of $B(T_{k,n})$ is at
least $n-k$ for $1 \le k < \lfloor n/2 \rfloor$, by proving that the
$(n-k-1)$-th dilate always fails magic positivity.  This gives a sharp lower
bound on the $m$-index throughout this range, confirming one half of the
conjecture.  Theorem~\ref{thm: asymptotic} establishes the conjecture in the
asymptotic regime: for any fixed $k$, the $(n-k)$-th dilate is magic positive
for all sufficiently large $n$.  Together, these results show that the
conjectured value $n-k$ is both a lower bound and (asymptotically) an upper
bound for the $m$-index.  Theorem~\ref{thm: sharp} provides complementary
numerical evidence, showing that the bound $k \le \lfloor n/2 \rfloor$ in the
conjecture is necessary: for $k > \lfloor n/2 \rfloor$, magic positivity fails
at the $(n-k)$-th dilate even for small values of $n$.

What remains open is the exact determination of $m\text{-index}(B(T_{k,n})) =
n-k$ for all $1 \le k \le \lfloor n/2 \rfloor$ and all $n$, not just
asymptotically.  A complete proof would require showing that all
magic-positive-basis coefficients of $D_{k,n}((n-k)x)$ are non-negative for all
$n$ in this range, a task that appears to require sharper control of the
interaction between the $Q_{k,n}$ and $S_{k,n}$ components than our current
asymptotic methods provide.

Our results suggest several natural directions for future work.  First, the
methods developed here --- in particular, the factorization of $D_{k,n}$ and
the use of Stirling-number identities to track sign behavior --- may apply to
Ehrhart polynomials of more general families of matroid base polytopes.
Ferroni \cite{Ferroni_2021} conjectures that similar positivity phenomena hold
for broader classes of matroids, and it would be interesting to investigate
whether the $m$-index can be computed or bounded for those families using
analogous techniques.  Second, the magic positivity framework itself could be
explored for other combinatorially defined polytopes, such as order polytopes,
flow polytopes, or generalized permutohedra, where the Ehrhart theory is
well-developed but the $h^*$-polynomial structure is less well understood.

\section*{Acknowledgement}

The authors are thankful to Shen Harman and Maxwell Dodson, students of Stanford Online High School, for helpful conversations.

%JS: in intro now
%\nocite{beck:disc}

\bibliography{sn-bibliography.bib}% common bib file
%% if required, the content of .bbl file can be included here once bbl is generated

\end{document}